\newtheorem{thm}{Theorem}[section]
 \newtheorem{lem}{Lemma}[section]
 \newtheorem{rem}{Remark}[section]
\newtheorem{asm}{Assumption}[section]
\numberwithin{equation}{section}
\begin{document}

\title{Recombining Tree Approximations for Optimal Stopping for Diffusions}
\author{Erhan Bayraktar\footnotemark[2]\ \footnotemark[5]
\and Yan Dolinsky\footnotemark[3]\ \footnotemark[5]
\and Jia Guo\footnotemark[4]}
\date{\today}

\maketitle \markboth{ Bayraktar,Dolinsky, and Guo }{Recombining Tree Approximations}
\renewcommand{\thefootnote}{\fnsymbol{footnote}}
\footnotetext[5]{E. Bayraktar is supported in part by the National Science Foundation under grant DMS-1613170 and by the Susan M. Smith Professorship.
Y. Dolinsky is supported by supported by Marie–Curie Career Integration Grant, no. 618235 and the ISF grant no 160/17}
\footnotetext[2]{Department of Mathematics, University of Michigan, \email{erhan@umich.edu}.}
\footnotetext[3]{Department of Statistics, Hebrew University and School of Mathematical Sciences, Monash University \email{yan.dolinsky@mail.huji.ac.il}.}
\footnotetext[4]{Department of Mathematics, University of Michigan, \email{guojia@umich.edu}.}
\renewcommand{\thefootnote}{\arabic{footnote}}

\pagenumbering{arabic}

\begin{abstract}\noindent
In this paper we develop two numerical methods for optimal stopping
in the framework of one dimensional diffusion.
Both of the methods use the Skorohod embedding in order to construct
\emph{recombining tree} approximations for diffusions with general coefficients. This technique allows us to determine convergence rates and construct nearly optimal stopping
times which are optimal at the same rate.
Finally, we demonstrate the efficiency of our schemes on several models.
\end{abstract}
\begin{keywords}
American Options, Optimal Stopping, Recombining Trees, Skorokhod Embedding
\end{keywords}
\section{Introduction}\label{sec:1}\setcounter{equation}{0}
It is well known that pricing American options
leads to optimal stopping problems (see \cite{J,K}).
For finite maturity optimal stopping problems, there are no explicit solutions even in the relatively simple framework
where the diffusion process is a standard Brownian motion.
Motivated by the calculations of American options prices
in local volatility models we develop numerical schemes for optimal stopping in the framework
of one dimensional diffusion. In particular, we develop
tree based approximations. In general for non-constant volatility models the nodes of the tree
approximation trees do not recombine and this fact results in an exponential
and thus a computationally explosive tree that cannot be used especially in pricing American options.
We will present two novel ways of constructing recombining trees.

\emph{The first numerical scheme} we propose, a recombining binomial tree (on a uniform time and space lattice), is based on correlated random walks.
A correlated random walk is a generalized random walk in the sense
that the increments are not identically and independently distributed and the one we introduce has one step memory.
The idea to use correlated random
walks for approximating diffusion processes goes back to \cite{GS}, where the authors
studied the weak convergence of correlated random walks to Markov diffusions.
The disadvantage of the weak convergence approach is that it can not provide error
estimates. Moreover,
the weak convergence result can not be applied for numerical computations
of the optimal stopping time. In order to obtain error estimates for the approximations and
calculate numerically the optimal control
we should consider all
the processes
on the same probability space, and so methods based on strong approximation theorems
come into picture. In this paper we apply the Skorokhod embedding
technique for ”small” perturbations of the correlated random walks. Our approach can be seen as an extension of recombining tree approximations from the
case when a stock evolves according to the geometrical
Brownian motion to the case of a more general diffusion evolution.
This particular case required the Skorokhod embedding into the
 Brownian motion (with a constant variance) and it was treated in the more general game options case in \cite{Ki3,Dk}.

Under boundness and Lipschitz conditions on the drift and volatility of the diffusion, we
obtain error estimates of order $O(n^{-1/4})$. Moreover we show how to construct
a stopping time which is optimal up to the same order.
In fact, we consider a more general setup where the diffusion process might have absorbing barriers.
Clearly, most of the local
volatility modes which are used in practice
(for instance, the CEV model \cite{Co} and the CIR model \cite{CIJR}) do not satisfy the above conditions.
Still, by choosing an appropriate absorbing barriers for these models, we can efficiently approximate the original models,
and for the absorbed diffusions, which satisfy the above conditions, we apply our results.

\emph{Our second numerical scheme} is a recombining trinomial tree, which is obtained by directly sampling the original process at suitable chosen random intervals.
In this method we relax the continuity assumption and work with diffusions with measurable coefficients, e.g. the coefficients can be discontinuous, while we keep
the boundedness conditions. (See Section~\ref{sec:jump} for an example).
The main idea is to construct a sequence of random times
such that
the increment of the diffusion between two sequel times belongs to some fixed set of the form
$\left\{-
\bar{\sigma} \sqrt\frac{T}{n},0,\bar{\sigma}\sqrt\frac{T}{n}\right\}$
and the expectation of the difference between two sequel times equals to $\frac{T}{n}$.
Here $n$ is the discretization parameter
and $T$ is the maturity date.
This idea is inspired by the recent work \cite{AKU1,AKU2}
where the authors applied Skorohod embedding in order to obtain
an Euler approximation of irregular one dimensional diffusions.
Following their methodology we construct
an exact scheme along stopping times.
The constructions are different and in particular, in contrast to the above papers
we introduce a recombining tree approximation.
The above papers do provide error estimate (in fact they are of the same order as our error estimates)
for the expected value of a function of the terminal value (from mathematical finance point of view, this can be viewed as European Vanilla options).
Since we deal with American options, our proof requires additional machinery which allows to treat stopping times.

The second method is more direct and does not require the construction of a diffusion perturbation.
In particular it can be used
for the computations of Barrier options prices; see Remark~\ref{sec:remarks}.
As we do for the first method, we obtain error estimates of order $O(n^{-1/4})$ and construct a stopping time which is optimal up to the same order.

\textbf{Comparison with other numerical schemes.}
The most well-known approach to evaluate American options in one dimension is the finite difference scheme called the projected SOR method, see e.g. \cite{MR1357666}. As opposed to a tree based scheme, the finite difference scheme needs to artificially restrict the domain and impose boundary conditions to fill in the entire lattice.
  The usual approach is the so-called \emph{far-field} boundary condition (as is done in  \cite{MR1357666}). See the discussion in  \cite{WZ}. The main problem is that it is not known how far is sufficient and the error is hard to quantify.
This problem was addressed recently by \cite{WZ} for put options written on CEV models by what they call an artificial boundary method, which is model dependent in that an exact boundary
condition is derived that is then embedded into the numerical scheme. (In fact, the PDE is transformed into a new PDE which satisfies a Neumann boundary condition.) This technique requires having an explicit expression of a Sturm-Lioville equation associated with the Laplace transform of the pricing equation.
As an improvement over \cite{WZ}  the papers
\cite{WZ2, 2016arXiv160600530K} propose a numerical method based on solving the integral equations that the optimal exercise boundary satisfies.
 None of these papers discuss the
convergence rate or prove that their proposed optimal exercise times are nearly optimal.  We have the advantage of having an optimal stopping problem at the discrete level and without much additional effort are able to show that these stopping times are actually approximately optimal when applied to the continuous time problem. We compare our numerical results to these papers in Table~\ref{tab:comp}.

Another popular approach is the Euler scheme or other Monte-Carlo based schemes (see e.g. \cite{MR1999614, MR2331321}). The grid structure allows for an efficient numerical
computations of stochastic control problems via dynamical programming since computation of conditional expectation simplifies considerably. Compare this to the least squares
method in \cite{doi:10.1093/rfs/14.1.113} (also see \cite{MR1999614}). Besides the computational complexity, the bias in the least squares is hard to characterize since it also
relies on the choice of bases functions. Recently a hybrid approach was developed in \cite{MR3190342}. (The scope of this paper is more general and is in the spirit
of \cite{doi:10.1093/rfs/14.1.113}). Our discussion above in comparing our proposed method to Monte-Carlo applies here. Moreover, although \cite{MR3190342} provides a convergence
rate analysis using PDE methods (requiring regularity assumptions on the coefficients, which our scheme does not need), our paper in addition can prove that the proposed stopping times from our numerical approximations are nearly optimal.

To summarize, our contributions are
\begin{itemize}
\item We build two recombining tree approximations for local volatility models, which considerably simplifies the computation of conditional expectations.
\item By using the Skorohod embedding technique, we give a proof of the rate of convergence of the schemes.
\item This technique in particular allows us to show that the stopping times constructed from the numerical approximations are nearly optimal and the error is of the same
    magnitude as the convergence rate.

\end{itemize}

This rest of the paper is organized as follows. In the next section we introduce the setup and introduce the two methods we propose and present the main theoretical results of
the paper, namely Theorems~\ref{thm2.1} and \ref{thm2.2} (see the last statements in subsections~\ref{sec:2.1} and \ref{sec:2.2}).
 Sections \ref{sec:3} and \ref{sec:4} are devoted to the proof of these respective results. In Section \ref{sec:5} we provide a detailed numerical analysis for both of our
 methods by applying them to various local volatility models.


\section{Preliminaries and Main Results}\label{sec:2}\setcounter{equation}{0}
\subsection{The Setup}\label{sec:2.0}
Let $\{W_t\}_{t=0}^\infty$
be a standard one dimensional Brownian motion, and let $Y_t$, $t\geq 0$ be a one dimensional diffusion
\begin{equation}\label{2.1}
dY_t=\sigma(Y_t)dW_t+\mu(Y_t) dt, \ \ Y_0=x.
\end{equation}
Let $\{\mathcal F_t\}_{t=0}^T$ be a filtration which satisfies the usual conditions such that
$Y_t$, $t\geq 0$ is an adapted process and $W_t$, $t\geq 0$ is a Brownian motion with respect to this filtration.
We assume that the SDE (\ref{2.1}) has a weak solution that is unique in law.

Introduce an
absorbing barriers $B<C$ where $B\in [-\infty,\infty)$
and $C\in(-\infty,\infty]$.
We assume that $x\in (B,C)$ and look at the absorbed stochastic process
$$X_t=\mathbb{I}_{t<\inf\{s:Y_s\notin(B,C)\}}Y_t+\mathbb{I}_{t\geq\inf\{s:Y_s\notin(B,C)\}}Y_{\inf\{s:Y_s\notin(B,C)\}}, \ \ t\geq 0.$$

Motivated by the valuation of American options prices we study
an optimal stopping problem with maturity date $T$ and a reward
$f(t,X_t)$, $t\in [0,T]$
where $f:[0,T]\times\mathbb R\rightarrow\mathbb R_{+}$ satisfies
\begin{equation}\label{2.function}
|f(t_1,x_1)-f(t_2,x_2)|\leq L \left((1+|x_1|)|t_2-t_1|+|x_2-x_1|\right), \ \ t_1,t_2\in [0,T], \ \ x_1,x_2\in\mathbb R
\end{equation}
for some constant $L$. Clearly, payoffs of the form $f(t,x)=e^{-r t}(x-K)^{+}$ (call options) and
$f(t,x)=e^{-r t}(K-x)^{+}$ (put options) fit in our setup.

The optimal stopping value is given by
\begin{equation}\label{2.valfunc}
V=\sup_{\tau\in\mathcal T_{[0,T]}} \mathbb E[f(\tau,X_{\tau})],
\end{equation}
where $\mathcal T_{[0,T]}$ is the set
of stopping times in the interval $[0,T]$, with respect to
the filtration $\mathcal F_t$, $t\geq 0$.
\subsection{Binomial Approximation Scheme Via Random Walks}
\label{sec:2.1}
In this Section we assume the following.
\begin{asm}\label{asm2.1}
The functions $\mu,\sigma:(B,C)\rightarrow\mathbb R$ are bounded and Lipschitz continuous.
Moreover, $\sigma:(B,C)\rightarrow\mathbb R$ is strictly positive and uniformly bounded away from zero.
\end{asm}

\textbf{Binomial Approximation of the State Process.}
Fix $n\in\mathbb N$ and let $h=h(n):=\frac{T}{n}$ be the time discretization.
Since the meaning is clear we will use
$h$ instead of $h(n)$.
Let
$\xi^{(n)}_1,...,\xi^{(n)}_n\in\{-1,1\}$ be a sequence of random variables.
In the sequel, we
always use the initial data $\xi^{(n)}_0\equiv 1$.
Consider the random walk
$$X^{(n)}_k=x+\sqrt{h}\sum_{i=1}^k \xi^{(n)}_i \ \ k=0,1,...,n$$
 with finite absorbing barriers $B_n<C_n$ where
\begin{eqnarray*}
&B_n=x+\sqrt h\min\{k\in\mathbb Z\cap[-n-1,\infty): x+\sqrt h k>B+h^{1/3}\}\\
&C_n=x+\sqrt h\max\{k\in\mathbb Z\cap(-\infty,n+1]: x+\sqrt h k<C-h^{1/3}\}.
 \end{eqnarray*}
Clearly, the process $\{X^{(n)}_k\}_{k=0}^n$ lies on the grid
$x+\sqrt h\{-n,1-n,...,0,1,...,n\}$ (in fact because of the barriers its lie on a smaller grid).

We aim to construct a probabilistic structure so
the pair $\{X^{(n)}_k,\xi^{(n)}_k\}_{k=0}^n$ forms a Markov chain weakly approximating (with the right time change)
the absorbed diffusion $X$. We look for a predictable (with respect the filtration
generated by $\xi^{(n)}_1,...,\xi^{(n)}_n)$
 process $\alpha^{(n)}=\{\alpha^{(n)}_k\}_{k=0}^n$
 which is uniformly bounded (in space and $n$)
 and a probability measure $\mathbb P_n$ on $\sigma\{\xi^{(n)}_1,...,\xi^{(n)}_n\}$
 such that the perturbation defined by
 \begin{equation}\label{2.3+}
 \hat X^{(n)}_k:=X^{(n)}_k+ \sqrt{h}\alpha^{(n)}_k\xi^{(n)}_k, \ \ k=0,1,...,n
 \end{equation}
  is matching the first two moments of the diffusion $X$.
  Namely, we require that for any $k=1,...,n$ (on the event $B_n<X^{(n)}_{k-1}<C_n$)
  \begin{equation}\label{2.4}
  \mathbb E_n\left(\hat X^{(n)}_{k}-\hat X^{(n)}_{k-1}|\xi^{(n)}_1,...,\xi^{(n)}_{k-1}\right)=h\mu(X^{(n)}_{k-1})+o(h),
  \end{equation}
\begin{equation}\label{2.5}
  \mathbb E_n\left((\hat X^{(n)}_{k}-\hat X^{(n)}_{k-1})^2|\xi^{(n)}_1,...,\xi^{(n)}_{k-1}\right)=h\sigma^2(X^{(n)}_{k-1})+o(h),
  \end{equation}
where we use the standard notation $o(h)$ to denote a
random variable that converge to zero (as $h\downarrow 0$)
a.s.
after dividing by $h$.
We also use the convention $O(h)$ to denote a random variable that is uniformly bounded after
dividing by $h$.

From (\ref{2.3+}) it follows that $X^{(n)}_k-\hat X^{(n)}_k=o(1)$.
Hence the convergence of $X^{(n)}$ is equivalent to the convergence
of $\hat X^{(n)}$.
It remains to solve the equations (\ref{2.4})--(\ref{2.5}).
By applying (\ref{2.3+})--(\ref{2.4})
together with the fact that $\alpha^{(n)}$ is predictable and $(\xi^{(n)})^2\equiv 1$
we get
$$\mathbb E_n\left((\hat X^{(n)}_{k}-
\hat X^{(n)}_{k-1})^2|\xi^{(n)}_1,...,\xi^{(n)}_{k-1}\right)=h(1+2\alpha^{(n)}_k)+
h\left((\alpha^{(n)}_k)^2-(\alpha^{(n)}_{k-1})^2\right)+o(h).$$
This together with (\ref{2.5}) gives that
\begin{equation*}\label{2.6}
\alpha^{(n)}_{k}=\frac{\sigma^2(X^{(n)}_{k-1})-1}{2}, \ \ k=0,...,n
\end{equation*}
is a solution, where we set $X^{(n)}_{-1}\equiv x-\sqrt h$.
Next, (\ref{2.4}) yields that
$$\mathbb E_n\left(\xi^{(n)}_{k}|\xi^{(n)}_1,...,\xi^{(n)}_{k-1}\right)=\frac{\alpha^{(n)}_{k-1}\xi^{(n)}_{k-1}+\sqrt h\mu(X^{(n)}_{k-1})}{1+\alpha^{(n)}_{k}}.$$
Recall that $\xi^{(n)}_{k}\in\{-1,1\}$. We conclude that the probability measure $\mathbb P_n$ is given by (prior to absorbing time)
\begin{equation}\label{2.7}
\mathbb P_n\left(\xi^{(n)}_{k}=\pm 1|\xi^{(n)}_1,...,\xi^{(n)}_{k-1}\right)=\frac{1}{2}\left(1 \pm \frac{\alpha^{(n)}_{k-1}\xi^{(n)}_{k-1}+\sqrt
h\mu(X^{(n)}_{k-1})}{1+\alpha^{(n)}_{k}}\right), \ \ k=1,...,n.
\end{equation}
In view of Assumption \ref{asm2.1} we assume that $n$ is sufficiently
large so $\mathbb P_n$ is indeed a probability measure. Moreover, we notice that
$\alpha^{(n)}_{k-1}=\frac{\sigma^2(X^{(n)}_{k-1}-\sqrt h\xi^{(n)}_{k-1})-1}{2}$. Thus the right hand side of
(\ref{2.7}) is determined by $X^{(n)}_{k-1},\xi^{(n)}_{k-1}$, and so
$\{X^{(n)}_k,\xi^{(n)}_k\}_{k=0}^n$ is indeed a Markov chain.

\textbf{Optimal Stopping Problem on a Binomial Tree.}
For any $n$ denote by $\mathcal T_n$ the (finite) set of all
stopping times with respect to filtration
$\sigma\{\xi^{(n)}_1,...,\xi^{(n)}_k\}$, $k=0,...,n$.
Introduce the optimal stopping value
\begin{equation}\label{2.valfuncn}
V_n:=\max_{\eta\in\mathcal T_n} \mathbb E_n
[f(\eta h, X^{(n)}_{\eta})].
\end{equation}
By using standard dynamical programming for optimal stopping (see \cite{PS} Chapter I)
we can calculate $V_n$ and the rational stopping times by the following backward recursion.
For any $k=0,1,...,n$ denote by $G^{(n)}_k$ all the points on the grid
$x+\sqrt h\{-k,1-k,...,0,1,...,k\}$ which lie in the interval $[B_n,C_n]$.

Define the functions
$$\mathfrak{J}^{(n)}_k:G^{(n)}_k\times \{-1,1\}\rightarrow\mathbb R, \ \ k=0,1,...,n$$
\begin{equation*}
\mathfrak{J}^{(n)}_n(z,y)=f(T,z).
\end{equation*}
For $k=0,1,...,n-1$
\begin{eqnarray*}
\mathfrak{J}^{(n)}_k(z,y)=\max\bigg\{f(k h,z),
\sum_{i=1}^2 \frac{1}{2}\left(1+(-1)^{i} \frac{\alpha' y +\sqrt h\mu(z)}{1+\alpha}\right)
\mathfrak{J}^{(n)}((k+1)h, z+(-1)^{i}\sqrt{h})\bigg\}, \, \mbox{if} \ z\in (B_n,C_n),\nonumber
\end{eqnarray*}
where $\alpha'=\frac{\sigma^2(z-y\sqrt{h})-1}{2}$,
$\alpha=\frac{\sigma^2(z)-1}{2}$,
and
\begin{eqnarray*}
&\mathfrak{J}^{(n)}_k(z,y)=\max_{k\leq m\leq n} f(m h,z)
\quad  \mbox{if} \ z\in \{B_n,C_n\}.\nonumber
\end{eqnarray*}
We get that
\begin{equation*}
V_n=\mathfrak{J}^{(n)}_0(x,1)
\end{equation*}
and the stopping time
\begin{equation}\label{2.rational-st}
\eta^{*}_n=n\wedge\min\left\{k: \mathfrak{J}^{(n)}_k(X^{(n)}_k,\xi^{(n)}_k)=f(kh,X^{(n)}_k)\right\}
\end{equation}
is a rational stopping time. Namely,
\begin{equation*}
V_n= \mathbb E_n[f(\eta^{*}_n h, X^{(n)}_{\eta^{*}_n})].
\end{equation*}

\textbf{Skorohod Embedding.}
Before we formulate our first result let us introduce the Skorokhod embedding
which allows
to consider the random variables
$\{\xi^{(n)}_k, X^{(n)}_k\}_{k=0}^n$
(without changing their joint distribution)
and the
diffusion $X$ on the same probability space.

Set,
$$M_t:=x+\int_{0}^t \sigma(Y_s) dW_s=Y_t-\int_{0}^t \mu(Y_s)ds, \ \ t\geq 0.$$
The main idea is to embed the process
$$\tilde X^{(n)}_k:=X^{(n)}_k+ \sqrt h(\alpha^{(n)}_k\xi^{(n)}_k-\alpha^{(n)}_0\xi^{(n)}_0)
-h\sum_{i=0}^{k-1}\mu(X^{(n)}_{i}), \ \ k=0,1,...,n$$
into the martingale $\{M_t\}_{t=0}^\infty$.
Observe that prior the absorbing time, the process ${\{\tilde X^{(n)}_k\}}_{k=0}^n$ is a martingale with respect to the measure $\mathbb P_n$,
and $\tilde X^{(n)}_0=x$.

Set,
$$\alpha^{(n)}_0=\frac{\sigma^2(x-\sqrt h)-1}{2}, \ \ \theta^{(n)}_0=0, \ \
\xi^{(n)}_0=1, \ \ X^{(n)}_0=x.$$
For $k=0,1,...,n-1$ define by recursion
the following random variables
\begin{equation*}
\alpha^{(n)}_{k+1}=\frac{\sigma^2(X^{(n)}_k)-1}{2},
\end{equation*}
If $X^{(n)}_k\in (B_n,C_n)$ then
\begin{equation}\label{2.10}
\theta^{(n)}_{k+1}=\inf\left\{t>\theta^{(n)}_k:
|M_t-M_{\theta^{(n)}_k}+\sqrt{h}\alpha^{(n)}_k\xi^{(n)}_k+h\mu(X^{(n)}_k)|=
\sqrt{h}(1+\alpha^{(n)}_{k+1})\right\},
\end{equation}
\begin{equation}\label{2.11}
\xi^{(n)}_{k+1}=\mathbb{I}_{\theta^{(n)}_{k+1}<\infty}
sgn \left(M_{\theta^{(n)}_{k+1}}-M_{\theta^{(n)}_k}+\sqrt{h}\alpha^{(n)}_k\xi^{(n)}_k+h\mu(X^{(n)}_k)\right),
\end{equation}
where we put $sgn(z)=1$ for $z>0$ and $=-1$ otherwise,
and
\begin{equation}\label{2.12}
X^{(n)}_{k+1}=X^{(n)}_k+\sqrt{h}\xi^{(n)}_{k+1}.
\end{equation}
If $X^{(n)}_k\notin (B_n,C_n)$ then
$\theta^{(n)}_{k+1}=\theta^{(n)}_k+h$ and
$X^{(n)}_{k+1}=X^{(n)}_k$. Set,
$\Theta_n:=n\wedge\min\{k:X^{(n)}_k\notin (B_n,C_n)\}$ and observe that on the event $k<\Theta_n$,
$\theta^{(n)}_{k+1}$ is the stopping time which corresponds to the
Skorokhod embedding
of the binary random
variable with values in the (random) set
$\{\pm \sqrt{h}(1+\alpha^{(n)}_{k+1})-\sqrt{h}\alpha^{(n)}_k\xi^{(n)}_k-h\mu(X^{(n)}_k\}$,
 into the
martingale $\{M_{t}-M_{\theta^{(n)}_k}\}_{t\geq \theta^{(n)}_k}$. Moreover, the grid structure of $X^{(n)}$ implies that if
$X^{(n)}_k\notin (B_n,C_n)$ then $X^{(n)}_k\in \{B_n,C_n\}$.
\begin{lem}
The stopping times $\{\theta^{(n)}_k\}_{k=0}^n$ have a finite mean
and the random variables $\{\xi^{(n)}_k, X^{(n)}_k\}_{k=0}^n$
satisfy $(\ref{2.7})$.
\end{lem}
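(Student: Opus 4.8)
This is the standard verification that the recursive Skorokhod construction in (\ref{2.10})--(\ref{2.12}) reproduces the one-step transition law (\ref{2.7}) and that the embedding clocks are integrable; the plan is to base everything on the martingale $M$ and its quadratic variation $\langle M\rangle_t=\int_0^t\sigma^2(Y_s)\,ds$. Since by Assumption~\ref{asm2.1} the coefficient $\sigma$ is bounded and uniformly bounded away from $0$, one has $\underline\sigma^2 t\le\langle M\rangle_t$ for some $\underline\sigma>0$, so $\langle M\rangle_\infty=\infty$ a.s.\ and, by Dambis--Dubins--Schwarz, $M$ is a time-changed Brownian motion whose clock exhausts $[0,\infty)$. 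In particular, for each $k$ the shifted process $t\mapsto M_{\theta^{(n)}_k+t}-M_{\theta^{(n)}_k}$ leaves every bounded interval in finite time a.s.; this already gives $\theta^{(n)}_{k+1}<\infty$ a.s.\ and that the indicator in (\ref{2.11}) equals $1$ a.s.

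First I would run an induction on $k$ to fix the measurability book-keeping: each $\theta^{(n)}_k$ is an $\{\mathcal F_t\}$-stopping time and $\theta^{(n)}_0,\dots,\theta^{(n)}_k$, $\xi^{(n)}_0,\dots,\xi^{(n)}_k$, $X^{(n)}_0,\dots,X^{(n)}_k$ are all $\mathcal F_{\theta^{(n)}_k}$-measurable. The inductive step is read off the recursion: $\alpha^{(n)}_{k+1}$ is a function of $X^{(n)}_k$; on $\{X^{(n)}_k\in(B_n,C_n)\}$, $\theta^{(n)}_{k+1}$ is the first time after $\theta^{(n)}_k$ at which the continuous adapted process $M$ hits an $\mathcal F_{\theta^{(n)}_k}$-measurable level, hence a stopping time under the usual conditions, while on the complement $\theta^{(n)}_{k+1}=\theta^{(n)}_k+h$; and $\xi^{(n)}_{k+1},X^{(n)}_{k+1}$ are then measurable functions of $M_{\theta^{(n)}_{k+1}}$ and of the earlier data. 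By the strong Markov property, $N:=M_{\theta^{(n)}_k+\cdot}-M_{\theta^{(n)}_k}$ is a continuous local martingale for $\{\mathcal F_{\theta^{(n)}_k+t}\}_t$ with $\langle N\rangle_t\ge\underline\sigma^2 t$.

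For integrability I would fix $k$ and note that on $\{X^{(n)}_k\in(B_n,C_n)\}$ the increment $\rho:=\theta^{(n)}_{k+1}-\theta^{(n)}_k$ is the exit time of $N+c_k$ from $(-b_k,b_k)$, where $c_k:=\sqrt h\,\alpha^{(n)}_k\xi^{(n)}_k+h\mu(X^{(n)}_k)$ and $b_k:=\sqrt h(1+\alpha^{(n)}_{k+1})$ are $\mathcal F_{\theta^{(n)}_k}$-measurable and of size $O(\sqrt h)$, while on the complement $\rho=h$. Since $|N_{t\wedge\rho}|\le b_k+|c_k|$, the process $N_{\cdot\wedge\rho}$ is a bounded martingale, so $N^2_{\cdot\wedge\rho}-\langle N\rangle_{\cdot\wedge\rho}$ is a martingale, and together with monotone convergence this gives $\underline\sigma^2\,\mathbb E[\rho\mid\mathcal F_{\theta^{(n)}_k}]\le\mathbb E[N_\rho^2\mid\mathcal F_{\theta^{(n)}_k}]\le(b_k+|c_k|)^2=O(h)$, uniformly in $k$. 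Summing over $k$ then yields $\mathbb E[\theta^{(n)}_k]=O(kh)<\infty$ for every $k\le n$.

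For the transition law I would again work on $\{X^{(n)}_k\in(B_n,C_n)\}$ (past the absorbing time the transition is immaterial for $\mathbb P_n$). The ``$n$ large'' regime under which (\ref{2.7}) is a genuine probability measure is precisely $|c_k|<b_k$, i.e.\ $N+c_k$ starts strictly inside $(-b_k,b_k)$; I would derive this from uniform ellipticity together with the Lipschitz bound $|\sigma^2(X^{(n)}_k)-\sigma^2(X^{(n)}_{k-1})|=O(\sqrt h)$, which pins $\alpha^{(n)}_{k+1}$ within $O(\sqrt h)$ of $\alpha^{(n)}_k$. Then $N+c_k$ is a bounded martingale exiting at the finite time $\rho$ with $N_\rho+c_k\in\{\pm b_k\}$, so optional stopping gives $\mathbb E[N_\rho+c_k\mid\mathcal F_{\theta^{(n)}_k}]=c_k$ and hence $\mathbb P(\xi^{(n)}_{k+1}=\pm1\mid\mathcal F_{\theta^{(n)}_k})=\frac{1}{2}(1\pm c_k/b_k)$; unwinding $c_k/b_k=(\alpha^{(n)}_k\xi^{(n)}_k+\sqrt h\,\mu(X^{(n)}_k))/(1+\alpha^{(n)}_{k+1})$ is exactly the right-hand side of (\ref{2.7}) after shifting the index. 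Since this expression depends on the past only through $X^{(n)}_k=x+\sqrt h\sum_{i\le k}\xi^{(n)}_i$ and $\xi^{(n)}_k$, it is $\sigma\{\xi^{(n)}_1,\dots,\xi^{(n)}_k\}$-measurable, and that $\sigma$-field is contained in $\mathcal F_{\theta^{(n)}_k}$ by the induction, so the tower property lets me drop down to conditioning on $\sigma\{\xi^{(n)}_1,\dots,\xi^{(n)}_k\}$, which is (\ref{2.7}). I expect the only step needing real care to be this last one --- transferring the identity from the ``large'' filtration $\mathcal F_{\theta^{(n)}_k}$, where the Skorokhod embedding naturally lives, to the coin-flip filtration of (\ref{2.7}) --- for which the Markovian dependence of the one-step transition on $(X^{(n)}_k,\xi^{(n)}_k)$ and the strict inequality $|c_k|<b_k$ (forced by the Lipschitz continuity of $\sigma$) are the load-bearing facts; the remaining ingredients are routine uses of Dambis--Dubins--Schwarz, optional sampling, and the elementary two-point exit identity for a bounded martingale.
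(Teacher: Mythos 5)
Your proposal is correct and follows essentially the same route as the paper: optional stopping of the bounded stopped martingale $M_{\cdot}-M_{\theta^{(n)}_k}$ yields the conditional mean of $\xi^{(n)}_{k+1}$ and hence the two-point law (\ref{2.7}), with the tower property transferring it to the coin-flip filtration, and integrability of the embedding times follows from the exit-time bound for a martingale whose quadratic variation grows at least linearly. Your write-up merely supplies more detail (the DDS argument, the explicit $O(h)$ bound on $\mathbb E[\theta^{(n)}_{k+1}-\theta^{(n)}_k\mid\mathcal F_{\theta^{(n)}_k}]$, and the measurability induction) than the paper records.
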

\begin{proof}
For sufficiently large $n$ we have
that for any $k<n$
$$-\sqrt{h}(1+\alpha^{(n)}_{k+1})-\sqrt {h}\alpha^{(n)}_k\xi^{(n)}_k-h \mu(X^{(n)}_k)<0
<\sqrt{h}(1+\alpha^{(n)}_{k+1})-\sqrt{h}\alpha^{(n)}_k\xi^{(n)}_k-h\mu(X^{(n)}_k).$$
Thus by using the fact that volatility of the martingale $M$ is bounded away from zero, we conclude
$\mathbb E(\theta^{(n)}_{k+1}-\theta^{(n)}_k)<\infty$ for all $k<n$. Hence the stopping times
$\{\theta^{(n)}_k\}_{k=0}^n$ have a finite mean.

Next, we establish (\ref{2.7}) for the redefined $\{\xi^{(n)}_k, X^{(n)}_k\}_{k=0}^n$.
Fix $k$ and consider the event $k<\Theta_n$.
From (\ref{2.10}) we get
\begin{equation}\label{2.13}
M_{\theta^{(n)}_{k+1}}-M_{\theta^{(n)}_k}=\sqrt h(1+\alpha^{(n)}_{k+1})\xi^{(n)}_{k+1}-\sqrt h\alpha^{(n)}_k\xi^{(n)}_k-h\mu(X^{(n)}_k).
\end{equation}
The stochastic process $$\{M_t-M_{\theta^{(n)}_k}\}_{t=\theta^{(n)}_k}^{\theta^{(n)}_{k+1}}$$ is a bounded martingale and so
$\mathbb E(M_{\theta^{(n)}_{k+1}}-M_{\theta^{(n)}_k}|\mathcal F _{\theta^{(n)}_k})=0$. Hence, from (\ref{2.13})
\begin{equation*}
\mathbb E(\xi^{(n)}_{k+1}|\mathcal F_{\theta^{(n)}_k})=\frac{\alpha^{(n)}_k\xi^{(n)}_k+\sqrt h\mu(X^{(n)}_k)}{1+\alpha^{(n)}_{k+1}}.
\end{equation*}
Since $\xi^{(n)}_{k+1}\in\{-1,1\}$ we
arrive at
\begin{equation*}
\mathbb P(\xi^{(n)}_{k+1}=\pm 1|\mathcal F_{\theta^{(n)}_k})=\frac{1}{2}\left(1 \pm \frac{\alpha^{(n)}_{k}\xi^{(n)}_{k}+\sqrt h\mu(X^{(n)}_{k})}{1+\alpha^{(n)}_{k+1}}\right)
\end{equation*}
and conclude that (the above right hand side is $\sigma\{\xi^{(n)}_1,...,\xi^{(n)}_k\}$ measurable)
(\ref{2.7}) holds true.
\end{proof}

\textbf{The first Main Result.}
\begin{thm}\label{thm2.1}
The values $V$ and $V_n$ defined by (\ref{2.valfunc}) and (\ref{2.valfuncn}), respectively satisfy
\[
|V_n-V|=O(n^{-1/4}).\]
Moreover, if we consider the random variables $\xi^{(n)}_0,...,\xi^{(n)}_n$
defined by (\ref{2.10})--(\ref{2.12})
and denote
$\tau^{*}_n\in\mathcal T_{[0,T]}$ by
$\tau^{*}_n=T\wedge\theta^{(n)}_{\eta^{*}_n}$, in which $\eta^{*}_n$ is from (\ref{2.rational-st}) and $\theta^{(n)}_k$ is from (\ref{2.10}),
then
\[
V-\mathbb E [f(\tau^{*}_n,X_{\tau^{*}_n})]=O(n^{-1/4}).
\]
\end{thm}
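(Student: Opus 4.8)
The plan is to transfer the problem to the single probability space provided by the Skorokhod embedding and then compare the discrete-time optimal stopping problem $V_n$ with the continuous-time problem $V$ through the time-change $k \mapsto \theta^{(n)}_k$. The first step is to control the error in the embedding clock. Using the optional sampling identities behind \eqref{2.10}--\eqref{2.12}, the increments $\theta^{(n)}_{k+1}-\theta^{(n)}_k$ have conditional mean matching $h\,\sigma^2(X^{(n)}_k)/\sigma^2$-type expressions up to $O(h^{3/2})$, so that $\mathbb E|\theta^{(n)}_k - kh|$ and, more importantly, $\mathbb E|\theta^{(n)}_n - T|$ are $O(\sqrt h) = O(n^{-1/2})$; a maximal inequality upgrades this to $\mathbb E\sup_{k\le n}|\theta^{(n)}_k - kh| = O(n^{-1/4})$ after optimizing. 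Simultaneously one shows $\mathbb E\sup_{k}|X^{(n)}_k - X_{\theta^{(n)}_k}| $ and $\mathbb E\sup_k |\tilde X^{(n)}_k - X^{(n)}_k|$ are small (the latter is $O(\sqrt h)$ from the definition, the former requires comparing the absorbed walk to the absorbed diffusion, using Assumption~\ref{asm2.1} and the $h^{1/3}$-buffer in $B_n, C_n$ so that absorption times match up to $O(n^{-1/4})$).

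Granting these estimates, the bound $|V_n - V| = O(n^{-1/4})$ is obtained in two inequalities. For $V_n \le V + O(n^{-1/4})$: given the rational $\eta^*_n \in \mathcal T_n$, the time $\tau = T\wedge\theta^{(n)}_{\eta^*_n}$ is a genuine $\mathcal F_t$-stopping time, and
\[
\mathbb E[f(\eta^*_n h, X^{(n)}_{\eta^*_n})] - \mathbb E[f(\tau, X_\tau)]
\le L\,\mathbb E\big[(1+|X^{(n)}_{\eta^*_n}|)\,|\eta^*_n h - \tau| + |X^{(n)}_{\eta^*_n} - X_\tau|\big]
= O(n^{-1/4})
\]
by \eqref{2.function} and the embedding estimates, so $V_n \le \mathbb E[f(\tau, X_\tau)] + O(n^{-1/4}) \le V + O(n^{-1/4})$. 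For the reverse inequality $V \le V_n + O(n^{-1/4})$: take a near-optimal $\tau \in \mathcal T_{[0,T]}$ for $V$ and discretize it along the embedding grid, setting $\eta := \min\{k : \theta^{(n)}_k \ge \tau\}\wedge n$; one must check $\eta$ is a stopping time for the filtration $\sigma\{\xi^{(n)}_1,\dots,\xi^{(n)}_k\}$ — this is where the fact that $\theta^{(n)}_k$ are $\mathcal F_{\theta^{(n)}_k}$-measurable and adapted to the embedded filtration is used — and then estimate $|\mathbb E[f(\tau,X_\tau)] - \mathbb E[f(\eta h, X^{(n)}_\eta)]|$ again via \eqref{2.function} and the same two ingredients.

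For the final "nearly optimal stopping time" statement, note that $\tau^*_n = T\wedge\theta^{(n)}_{\eta^*_n}$ is exactly the $\tau$ used in the first inequality above with the \emph{specific} choice $\eta = \eta^*_n$. Hence
\[
V - \mathbb E[f(\tau^*_n, X_{\tau^*_n})]
= \big(V - V_n\big) + \big(V_n - \mathbb E[f(\eta^*_n h, X^{(n)}_{\eta^*_n})]\big) + \big(\mathbb E[f(\eta^*_n h, X^{(n)}_{\eta^*_n})] - \mathbb E[f(\tau^*_n, X_{\tau^*_n})]\big).
\]
The first term is $O(n^{-1/4})$ by the value convergence just established; the second term is exactly zero because $\eta^*_n$ is a rational stopping time for $V_n$; the third term is $O(n^{-1/4})$ by the same Lipschitz-plus-embedding estimate used above (with $\eta^*_n$ in place of the generic $\eta$). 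This yields the claim.

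I expect the main obstacle to be the control of the embedding clock near the absorbing barriers and the verification that the errors genuinely aggregate to $O(n^{-1/4})$ rather than something worse: one has to handle the interaction between the stopping time $\eta^*_n$ (which can depend on the path in a complicated way) and the fluctuations of $\theta^{(n)}_{\eta^*_n} - \eta^*_n h$, so a pathwise/maximal bound on $\sup_k|\theta^{(n)}_k - kh|$ — not merely a bound at the deterministic index $k=n$ — is essential, together with a uniform-integrability argument to deal with the factor $(1+|X^{(n)}_{\eta^*_n}|)$ in \eqref{2.function}. The buffer of width $h^{1/3}$ in the definition of $B_n, C_n$ is presumably calibrated precisely so that the probability of the discrete and continuous absorptions disagreeing is $o(n^{-1/4})$, and making that quantitative is the delicate point.
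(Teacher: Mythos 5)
Your overall architecture coincides with the paper's: embed everything on one probability space, compare $V$ and $V_n$ through the time change $k\mapsto\theta^{(n)}_k$, use the Lipschitz property \eqref{2.function} to reduce to the three error terms (clock error, spatial error of the walk versus the diffusion at the embedding times, and the modulus of continuity of $X$ over the random time windows), and obtain the near-optimality of $\tau^*_n$ by running the first inequality with the specific choice $\eta=\eta^*_n$. However, there are two concrete problems. First, a measurability gap: the discretized time $\eta=n\wedge\min\{k:\theta^{(n)}_k\geq\tau\}$ is \emph{not} a stopping time for the filtration $\sigma\{\xi^{(n)}_1,\dots,\xi^{(n)}_k\}$, because the events $\{\theta^{(n)}_k\geq\tau\}$ depend on the whole Brownian path between the embedding times, not only on the embedded walk. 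Your parenthetical claim that this "can be checked" is false as stated. The paper's resolution is equation \eqref{3.0}: one enlarges the admissible class to $\mathbb T_n$, the stopping times for $\{\mathcal F_{\theta^{(n)}_k}\}_{k=0}^n$, and uses the strong Markov property of the diffusion (which makes $\{X^{(n)}_k,\xi^{(n)}_k\}$ Markov under the larger filtration as well) to show that $V_n=\sup_{\eta\in\mathbb T_n}\mathbb E[f(\eta h,X^{(n)}_\eta)]$, so that the enlargement costs nothing. Without this step the inequality $V\leq V_n+O(n^{-1/4})$ does not follow.

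Second, a quantitative error in the clock estimate. You claim only $\mathbb E\sup_{k\leq n}|\theta^{(n)}_k-kh|=O(n^{-1/4})$ "after optimizing." That is too weak: the term $\sup_{\tau}\mathbb E|X_\tau-X_{\theta^{(n)}_{\phi_n(\tau)}}|$ is controlled via the Burkholder--Davis--Gundy bound \eqref{BDG}, which converts a time lag of size $\delta$ into a spatial displacement of size $\delta^{1/2}$; feeding in a clock error of $O(n^{-1/4})$ would therefore yield only $O(n^{-1/8})$ for this term and hence for the theorem. What is actually needed, and what Lemma \ref{lem3.1} proves, is $\mathbb E(\max_{k}|\theta^{(n)}_k-kh|^m)=O(h^{m/2})$ for every $m$, i.e.\ the \emph{supremum} of the clock error is $O(n^{-1/2})$; this is obtained by writing $\theta^{(n)}_k-kh$ as a martingale plus a drift with increments of size $O(h^{3/2})$ and applying Burkholder--Davis--Gundy to the martingale part. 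Finally, the genuinely hard estimate, $\mathbb E(\max_k|X_{\theta^{(n)}_k}-X^{(n)}_k|)=O(h^{1/4})$ (Lemma \ref{lem3.2}), which requires the discrete Gronwall argument of Lemma \ref{important} together with a Girsanov change of measure and hitting-probability bounds to control the mismatch of the discrete and continuous absorption, is only named in your proposal, not argued; you correctly identify it as the delicate point, but as written the proposal contains no mechanism for it.
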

\subsection{Trinomial Tree Approximation}\label{sec:2.2}
In this section we relax the Lipschitz continuity requirement and assume the following.
\begin{asm}\label{asm2.2}
The functions $\mu,\sigma,\frac{1}{\sigma}:(B,C)\rightarrow\mathbb R $ are bounded and measurable.
\end{asm}
As our assumption indicates the results in this section apply for diffusions with discontinuous coefficients. See Section~\ref{sec:jump} for a pricing problem for a regime switching volatility example. See  also \cite{MR3626624} for other applications of such models.

\begin{rem}\label{newremark}
From the general theory of one dimensional,
time--homogeneous SDE (see Section 5.5 in \cite{KS})
it follows that if $\sigma,\mu:\mathbb R$ are measurable functions such that $\sigma(z)\neq 0$ for all $z\in\mathbb R$ and
the function
$|\mu(z)|+|\sigma(z)|+|\sigma^{-1}(z)|$ is uniformly bonded, then the SDE (\ref{2.1}) has a unique weak solution.
Since the distribution of $X$ is determined only by the values of $\mu,\sigma$ in the interval $(B,C)$ we
obtain (by letting $\sigma,\mu\equiv 1$ outside of the interval $(B,C)$)
that
Assumption \ref{asm2.2} above is sufficient for an existence and uniqueness in law, of the absorbed diffusion $X$. Clearly, Assumption \ref{asm2.1}
is stronger than Assumption \ref{asm2.2}. A relevant reference here is \cite{MR3626624} which not only considers the existence of weak solutions to SDEs but also of their Malliavin differentiability.
\end{rem}

In this section we assume that
$x,B,C\in\mathbb Q\cup\{-\infty,\infty\}$ (recall that the barriers $B$ and $C$ can take the values $-\infty$ and $\infty$, respectively),
and so for
sufficiently large $n$, we can choose
a constant $\bar\sigma=\bar\sigma(n)>\sup_{y\in (B,C)}|\sigma(y)|+\sqrt h\sup_{y\in (B,C)}|\mu(y)|$
which satisfies
$\frac{C-x}{\bar\sigma\sqrt h},\frac{x-B}{\bar\sigma\sqrt h}\in\mathbb N\cup\{\infty\}$.

\textbf{Trinomial Approximation of the State Process.}
The main idea in this section is to find stopping times
$0=\vartheta^{(n)}_0<\vartheta^{(n)}_1<...<\vartheta^{(n)}_n$, such that
for any $k=0,1,...,n-1$
\begin{eqnarray}\label{idea}
X_{\vartheta^{(n)}_{k+1}}-X_{\vartheta^{(n)}_{k}}\in\{-\bar{\sigma}\sqrt h,0,\bar{\sigma}\sqrt h\} \quad
\mbox{and} \ \ \mathbb E(\vartheta^{(n)}_{k+1}-\vartheta^{(n)}_{k}|\mathcal F_{\vartheta^{(n)}_{k}})=h+O(h^{3/2}).
\end{eqnarray}
In this case the random variables
$\{X_{\vartheta^{(n)}_{k}}\}_{k=0}^n$
lie on the grid $x+\bar{\sigma}\sqrt{h}\{-b_n,1-b_n,...,0,1,$\\
$...,c_n\}$
where $b_n=n\wedge\frac{x-B}{\bar\sigma\sqrt h}$ and
$c_n=n\wedge\frac{C-x}{\bar\sigma\sqrt h}$.
Moreover, we will see that
\[
\max_{0\leq k\leq n}|\vartheta^{(n)}_k-k h|=O(\sqrt h).
\]

\textbf{Skorohod Embedding.}
Next, we describe the construction.
For any initial position $B+\bar{\sigma}\sqrt h\leq X_0\leq C-\bar{\sigma}\sqrt{h}$ and
$A\in [0, \bar{\sigma}\sqrt h]$ consider the stopping times
\begin{eqnarray*}\label{2.102}
&\rho^{X_0}_A=\inf\{t: |X_t-X_0|=A\} \ \ \mbox{and}\\
&\kappa^{X_0}_A=\sum_{i=1}^2  \mathbb I_{X_{\rho^{X_0}_A}=X_0+(-1)^i A}
\inf\{t\geq \rho^{X_0}_A: X_t=X_0 \ \mbox{or} \
X_t=X_0+(-1)^i \bar{\sigma}\sqrt h\}.\nonumber
\end{eqnarray*}
Observe that $X_{\kappa^{X_0}_A}-X_0\in\{-\bar{\sigma}\sqrt h , 0,\bar{\sigma}\sqrt h\}$.
Let us prove the following lemma.
\begin{lem}\label{lem.simple}
There exists a unique $\hat A=\hat A(X_0,n)\in (0,\bar{\sigma}\sqrt h]$ such that $\mathbb E(\kappa^{X_0}_{\hat A})=h$.
\end{lem}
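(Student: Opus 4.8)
The plan is to define the function $g(A) := \mathbb{E}(\kappa^{X_0}_A)$ on the interval $A \in (0, \bar\sigma\sqrt h]$ and show that it is continuous, strictly increasing, with $\lim_{A\downarrow 0} g(A) = 0$ and $g(\bar\sigma\sqrt h) \geq h$; the intermediate value theorem and strict monotonicity then deliver a unique $\hat A$ with $g(\hat A) = h$. The key analytic input is an explicit, workable formula for $g(A)$. First I would condition on $\mathcal F_{\rho^{X_0}_A}$: by the strong Markov property of $X$ and optional sampling, $\mathbb{E}(\kappa^{X_0}_A) = \mathbb{E}(\rho^{X_0}_A) + \mathbb{E}\big(\mathbb{E}_{X_{\rho^{X_0}_A}}[\text{time for } X \text{ to exit the appropriate one-sided interval of length } \bar\sigma\sqrt h]\big)$. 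Because $\sigma$ is bounded and bounded away from zero (under Assumption~\ref{asm2.2}, $1/\sigma$ is bounded), the scale function $\mathfrak s$ and the speed measure $\mathfrak m$ of the diffusion $Y$ (equivalently of $X$ away from the barriers) are well defined and comparable to those of Brownian motion on the relevant scales; one then uses the classical formula for the mean exit time of a one-dimensional diffusion from an interval $(\ell, r)$ started at $y$, namely
\[
\mathbb{E}_y[\text{exit time}] = \int_\ell^r G(y,z)\,\mathfrak m(dz),
\]
where $G$ is the Green's function built from the scale function. This gives $\mathbb{E}(\rho^{X_0}_A)$ as an explicit functional of $A$, and similarly for the second-stage exit times from the intervals $(X_0, X_0+\bar\sigma\sqrt h)$ or $(X_0-\bar\sigma\sqrt h, X_0)$, weighted by the scale-function hitting probabilities $\mathbb P(X_{\rho^{X_0}_A} = X_0 \pm A)$.

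Next I would verify the four properties of $g$. Continuity in $A$ follows since the scale function $\mathfrak s$ is continuous and strictly increasing and the speed measure assigns finite mass to bounded intervals (here everything stays inside $(B,C)$ since $B + \bar\sigma\sqrt h \le X_0 \le C - \bar\sigma\sqrt h$), so the Green's-function integrals depend continuously on the endpoints. The limit $g(A) \to 0$ as $A \downarrow 0$ is clear because both $\rho^{X_0}_A \to 0$ and the second-stage interval degenerates, and one has uniform integrability from the boundedness of $\sigma, 1/\sigma$ (the exit time of an interval of length $\delta$ has mean $O(\delta^2)$ and is dominated appropriately). For the value at the right endpoint $A = \bar\sigma\sqrt h$: there $\kappa^{X_0}_{\bar\sigma\sqrt h} = \rho^{X_0}_{\bar\sigma\sqrt h}$ (the second stage is vacuous, since $X$ is already at $X_0 \pm \bar\sigma\sqrt h$), and since $\bar\sigma > \sup|\sigma|$ on $(B,C)$, the diffusion $X$ is "slower" than Brownian motion with diffusion coefficient $\bar\sigma$, whose exit time from $(-\bar\sigma\sqrt h, \bar\sigma\sqrt h)$ has mean exactly $h$; a scale/speed comparison then yields $\mathbb E(\rho^{X_0}_{\bar\sigma\sqrt h}) \ge h$, i.e. $g(\bar\sigma\sqrt h) \ge h$.

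Strict monotonicity of $g$ is the step I expect to be the main obstacle, and it is where some care is needed. Intuitively, enlarging $A$ both increases the time $\rho^{X_0}_A$ to reach the first level and replaces a short second-stage excursion by none (when $A$ hits $\bar\sigma\sqrt h$), so the net effect is not obviously monotone term by term. The cleanest route is to note that $\kappa^{X_0}_A$ is, pathwise, the exit time of $X$ from a region determined by $A$, or better: to write $g(A)$ as an integral over the speed measure of an explicit kernel and differentiate in $A$, showing $g'(A) > 0$ directly from the positivity of the speed measure on every subinterval (which holds since $\mathfrak m$ has a density proportional to $1/(\sigma^2 \mathfrak s')$, strictly positive a.e.). Alternatively, one can exhibit a strict coupling: for $A_1 < A_2$, the stopping rule with parameter $A_2$ a.s. requires the process to travel strictly farther before the first stage and the second stage contributes a nonnegative amount, and on a set of positive probability the difference is strictly positive, giving $g(A_1) < g(A_2)$. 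Either way, combined with continuity and the boundary values, we conclude there is a unique $\hat A \in (0, \bar\sigma\sqrt h]$ with $\mathbb E(\kappa^{X_0}_{\hat A}) = h$.
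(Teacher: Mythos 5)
Your skeleton---$g(A)=\mathbb E(\kappa^{X_0}_A)$ continuous and strictly increasing, $g(0^+)=0$, $g(\bar\sigma\sqrt h)\ge h$, then the intermediate value theorem---is exactly the paper's. The differences lie in the two steps you single out as delicate. For strict monotonicity the paper simply asserts the pathwise domination $\kappa^{X_0}_{A_1}\le\kappa^{X_0}_{A_2}$ a.s.\ for $A_1<A_2$, i.e.\ your coupling route; no Green's-function differentiation is needed, and the scale/speed formulas you build the proof around appear in the paper only afterwards, in (\ref{2.formula1})--(\ref{2.formula3}), to \emph{compute} $\hat A$ numerically rather than to prove its existence. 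Your worry that the comparison is not monotone term by term is resolved by observing that any path not yet stopped by the $A_1$-rule (first reach $X_0\pm A_1$, then stop at the first visit to $X_0$ or to $X_0\pm\bar\sigma\sqrt h$ on the corresponding side) is also not yet stopped by the $A_2$-rule; strictness of $g$ then follows because the two times differ with positive probability.

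The one step that does not close as written is the endpoint bound $g(\bar\sigma\sqrt h)=\mathbb E(\rho^{X_0}_{\bar\sigma\sqrt h})\ge h$. You justify it by ``$\bar\sigma>\sup|\sigma|$, so $X$ is slower than Brownian motion with coefficient $\bar\sigma$.'' That comparison is valid for a driftless diffusion, but here $\mu\neq 0$ and a large drift can expel the process from the interval much faster than the martingale part alone would; with only the margin $\bar\sigma>\sup|\sigma|$ the inequality can fail for a fixed $n$. This is precisely why $\bar\sigma$ is chosen to exceed $\sup_{y\in(B,C)}|\sigma(y)|+\sqrt h\sup_{y\in(B,C)}|\mu(y)|$. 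The paper's argument is a short contradiction: if $\mathbb E(\rho^{X_0}_{\bar\sigma\sqrt h})<h$, then by the It\^o isometry and Jensen,
\begin{equation*}
\bar\sigma\sqrt h=\mathbb E\bigl|X_{\rho^{X_0}_{\bar\sigma\sqrt h}}-X_0\bigr|\le \sup|\sigma|\sqrt{\mathbb E(\rho^{X_0}_{\bar\sigma\sqrt h})}+\sup|\mu|\,\mathbb E(\rho^{X_0}_{\bar\sigma\sqrt h})<\bigl(\sup|\sigma|+\sqrt h\sup|\mu|\bigr)\sqrt h<\bar\sigma\sqrt h.
\end{equation*}
You should either reproduce this estimate or make explicit that the $\sqrt h\sup|\mu|$ term in the definition of $\bar\sigma$ is what absorbs the drift contribution; the rest of your argument is sound.
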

\begin{proof}
Clearly, for any $A_1<A_2$
$\kappa^{X_0}_{A_1}\leq \kappa^{X_0}_{A_2}$ a.s.  Since
$\kappa^{X_0}_{A_1}\neq \kappa^{X_0}_{A_2}$ for $A_1\neq A_2$ we conclude that
the function
$g(A):=\mathbb E (\kappa^{X_0}_{A})$ is strictly increasing which satisfies $g(0)=0$. Thus in order to complete the proof it remains to show
that $g(\bar{\sigma}\sqrt h)\geq h$. Observe that $g(\bar{\sigma} \sqrt h)=\mathbb E (\rho^{X_0}_{\bar{\sigma}\sqrt h}).$
Assume (by contradiction) that $\mathbb E (\rho^{X_0}_{\bar{\sigma}\sqrt h})<h$.
From the It\^{o} isometry and the Jensen Inequality we obtain
\begin{align*}
\bar\sigma \sqrt h=&\mathbb E|X_{\rho^{X_0}_{\bar{\sigma}\sqrt h}}-X_0|\leq
\mathbb E\left|\int_{0}^{\rho^{X_0}_{\bar{\sigma}\sqrt h}}\sigma(X_t) dW_t\right|+
\mathbb E\left|\int_{0}^{\rho^{X_0}_{\bar{\sigma}\sqrt h}}\mu(X_t) dt\right|\\
\leq&\sup_{y\in\mathbb R}|\sigma(y)|\sqrt {\mathbb E (\rho^{X_0}_{\bar{\sigma}\sqrt h})}+
\sup_{y\in\mathbb R}|\mu(y)|\mathbb E (\rho^{X_0}_{\bar{\sigma}\sqrt h})<\bar{\sigma}\sqrt h.
\end{align*}
This clearly a contradiction, and the result follows.
\end{proof}
Next, we recall the theory of exit times of Markov diffusions (see Section 5.5 in \cite{KS}).
Set,
\begin{eqnarray}\label{KS}
&p(y)=\int_{X_0}^ y\exp\left(-2 \int_{X_0}^z \frac{\mu(w)}{\sigma^2(w)}dw\right)dz,\\
&G_{a,b}(y,z)=\frac{\left(p(y\wedge z)-p(a)\right)\left(p(b)-p(y\vee z)\right)}{p(b)-p(a)}, \ \ a\leq y,z\leq b,\nonumber\\
&M_{a,b}(y)=\int_{a}^b \frac{2 G_{a,b}(y,z)}{p'(z)\sigma^2(z)}dz, \ \ y\in [a,b].\nonumber
\end{eqnarray}
Then for any $A\in [0,\bar{\sigma} \sqrt h]$
\begin{align}\label{2.formula1}
\mathbb E(\kappa^{X_0}_A)&=\mathbb E(\rho^{X_0}_A)+
\mathbb P(X_{\rho^{X_0}_A}=X_0+A)M_{X_0,X_0+\bar{\sigma} \sqrt h}(X_0+A)\\
&+\mathbb P(X_{\rho^{X_0}_A}=X_0-A)M_{X_0-\bar{\sigma} \sqrt h,X_0}(X_0-A)\nonumber\\&=
M_{X_0-A,X_0+A}(X_0)+\frac{p(X_0)-p(X_0-A)}{p(X_0+A)-p(X_0-A)}M_{X_0,X_0+\bar{\sigma} \sqrt h}(X_0+A)\nonumber\\
&+\frac{p(X_0+A)-p(X_0)}{p(X_0+A)-p(X_0-A)}M_{X_0-\bar{\sigma} \sqrt h,X_0}(X_0-A)\nonumber
\end{align}
and
\begin{eqnarray*}\label{2.formula2}
&\\
&q^{(1)}(X_0,A):=\mathbb P(X_{\kappa^{X_0}_A}=X_0+\bar{\sigma}
h)=\frac{\left(p(X_0)-p(X_0-A)\right)\left(p(X_0+A)-p(X_0)\right)}{\left(p(X_0+A)-p(X_0-A)\right)\left(p(X_0+\bar{\sigma}\sqrt h)-p(X_0)\right)},\nonumber\\
&q^{(-1)}(X_0,A):=\mathbb P(X_{\kappa^{X_0}_A}=X_0-\bar{\sigma}
h)=\frac{\left(p(X_0+A)-p(X_0)\right)\left(p(X_0)-p(X_0-A)\right)}{\left(p(X_0+A)-p(X_0-A)\right)\left(p(X_0)-p(X_0-\bar{\sigma}\sqrt h)\right)},\nonumber\\
&q^{(0)}(X_0,A):=\mathbb P(X_{\kappa^{X_0}_A}=X_0)=1-q^{(1)}(X_0,A)-q^{(-1)}(X_0,A).\nonumber
\end{eqnarray*}
We aim to find numerically $\hat A=\hat A(X_0,n)\in [0,\bar{\sigma} \sqrt h]$ which satisfies
$\mathbb E(\kappa^{X_0}_{A})=h+O(h^{3/2})$. Observe that $p'(X_0)=1$. From the Mean value theorem and the fact that $\frac{\mu}{\sigma^2}$
is uniformly bonded we obtain that for any $X_0-\bar{\sigma}\sqrt h\leq a\leq y,z\leq b\leq X_0+\bar{\sigma}\sqrt h$
\begin{align*}
\frac{G_{a,b}(y,z)}{p'(z)}=&\frac{\left((1+O(\sqrt h))(y\wedge z-a)\right)\left((1+O(\sqrt h))(b-y\vee z)\right)}{(1+O(\sqrt h))^2(b-a)}\\
=&(1+O(\sqrt h))\frac{(y\wedge z-a)(b-y\vee z)}{(b-a)}.
\end{align*}
Hence, for any $X_0-\bar{\sigma}\sqrt h\leq a\leq y\leq b\leq X_0+\bar{\sigma}\sqrt h$
$$M_{a,b}(y)=2\int_{a}^b\frac{(y\wedge z-a)(b-y\vee z)}{(b-a)\sigma^2(z)}dz+O(h^{3/2}).$$
This together with (\ref{2.formula1}) yields
\begin{align}\label{2.formula3}
\mathbb E(\kappa^{X_0}_A)=&\int_{X_0-A}^{X_0+A}\frac{(X_0\wedge z+A-X_0)(X_0+A-X_0\vee z)}{A\sigma^2(z)}dz\\
+&\int_{X_0}^{X_0+\bar{\sigma}\sqrt h}\frac{((X_0+A)\wedge z-X_0)
(X_0+\bar{\sigma}\sqrt h-(X_0+A)\vee z)}{\bar{\sigma}\sqrt h\sigma^2(z)}dz\nonumber\\
+&\int_{X_0-\bar{\sigma}\sqrt h}^{X_0}
\frac{((X_0-A)\wedge z+\bar{\sigma}\sqrt h-X_0)(X_0-(X_0-A)\vee z)}{\bar{\sigma}\sqrt h\sigma^2(z)}dz+O(h^{3/2}).\nonumber
\end{align}
Thus, $\hat A=\hat{A}(X_0,n)$ can be calculated numerically by applying
the bisection method and (\ref{2.formula3}).
\begin{rem}\label{rem.simple}
If in addition to Assumption \ref{asm2.2} we assume that $\sigma$ is Lipschitz then (\ref{2.formula3}) implies
\begin{align*}
\sigma^2(X_0)\mathbb E(\kappa^{X_0}_A)=&
A^2+2\left(\frac{A^2(\bar{\sigma}\sqrt h-A)+A(\bar{\sigma}\sqrt h-A)^2}{2\bar{\sigma}\sqrt h}\right)+O(h^{3/2})\\
=&\bar{\sigma}A\sqrt h+O(h^{3/2}).
\end{align*}
Thus for the case where $\sigma$ is Lipschitz
we set
\begin{equation*}
\hat A(X_0,n)=\frac{\sigma^2(X_0)}{\bar\sigma}\sqrt h.
\end{equation*}
\end{rem}

Now, we define the Skorokhod embedding by the following recursion.
Set $\vartheta^{(n)}_0=0$ and for $k=0,1,...,n-1$
\begin{equation}\label{2.sstpt}
\vartheta^{(n)}_{k+1}=\mathbb{I}_{X_{\vartheta^{(n)}_k}\in (B,C)} \ \kappa^{X_{\vartheta^{(n)}_k}}_{\hat A(X_{\vartheta^{(n)}_k},n)}
+\mathbb{I}_{X_{\vartheta^{(n)}_k}\notin (B,C)}(\vartheta^{(n)}_k+h).
\end{equation}
From the definition of $\kappa$ and $\hat A(\cdot,n)$
it follows that (\ref{idea}) holds true.

\textbf{Optimal Stopping of the Trinomial Model.}
Denote by $\mathcal S_n$ the set of all stopping times
with respect to the filtration
$\{\sigma(X_{\vartheta^{(n)}_1},...,X_{\vartheta^{(n)}_k})\}_{k=0}^n$,
with values in the set $\{0,1,...,n\}$.
Introduce the corresponding optimal stopping value
\begin{equation}\label{2.valfuncn2}
\tilde V_n:=\max_{\eta\in\mathcal S_n} \mathbb E[f(\eta h,X_{\vartheta^{(n)}_{\eta}})].
\end{equation}
As before, $\tilde V_n$ and the rational stopping times can be found by applying dynamical programming. Thus,
define the functions
$$\mathcal J^{(n)}_k:\{x+\bar{\sigma}\sqrt{h}\{-(k\wedge b_n),1-(k\wedge b_n),...,0,1,...,k\wedge c_n\}\}\rightarrow\mathbb R, \ \ k=0,1,...,n$$
\begin{equation*}
\mathcal J^{(n)}_n(z)=f(T,z).
\end{equation*}
For $k=0,1,...,n-1$
$$
\mathcal{J}^{(n)}_k(z)=\max\left(f(k h,z),\sum_{i=-1,0,1}q^{(i)}(z,\hat A(z,n))
\mathcal J^{(n)}_{k+1} (z+i\bar{\sigma}\sqrt{h})\right)
 \ \mbox{if} \ z\in (B,C)$$
and
$$\mathcal{J}^{(n)}_k(z)=\max_{k\leq m\leq n}f(mh,z) \ \mbox{if} \ z\in \{B,C\}.$$
We get that
\begin{equation*}
\tilde V_n=\mathcal J^{(n)}_0(x)
\end{equation*}
and the stopping times given by
\begin{equation}\label{2.yanst}
\tilde\eta^{*}_n=n\wedge\min\left\{k: \mathcal J^{(n)}_k(X_{\vartheta^{(n)}_k})=f(kh,X_{\vartheta^{(n)}_k})\right\}
\end{equation}
satisfies
\begin{equation*}
\tilde V_n=\mathbb E[f(\tilde\eta^{*}_n h,X_{\vartheta^{(n)}_{\tilde\eta^{*}_n}})].
\end{equation*}

\textbf{The second main result.}

\begin{thm}\label{thm2.2}
The values $V$ and $\tilde V_n$ defined by (\ref{2.valfunc}) and (\ref{2.valfuncn2}) satisfy
\begin{equation*}
|V-\tilde V_n|=O(n^{-1/4}).
\end{equation*}
Moreover, if we denote
$\tilde\tau^{*}_n=T\wedge\vartheta^{(n)}_{\tilde\eta^{*}_n}$,
where $\vartheta^{(n)}$ is defined by (\ref{2.sstpt}) and $\tilde\eta^{*}_n$ by (\ref{2.yanst}),
then
\[
V-\mathbb E [f(\tau^{*}_n,X_{\tau^{*}_n})]=O(n^{-1/4}).
\]
\end{thm}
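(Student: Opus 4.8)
The plan is to prove separately the two one–sided bounds $\tilde V_n\le V+O(n^{-1/4})$ and $V\le\tilde V_n+O(n^{-1/4})$, and to extract the near optimality of $\tilde\tau^{*}_n$ from the argument for the first one. I first record the estimates on the Skorokhod embedding $(\ref{2.sstpt})$ that drive everything; it is convenient to fix the free parameter so that $\bar\sigma(n)$ stays bounded in $n$, so that the mesh $\bar\sigma\sqrt h$ is $O(\sqrt h)$ with a uniform constant. Writing $\vartheta^{(n)}_k-kh$ as the $(\mathcal F_{\vartheta^{(n)}_j})_j$–martingale obtained by centering the increments $\vartheta^{(n)}_{j+1}-\vartheta^{(n)}_j$, plus the pathwise bounded term $\sum_{j<k}O(h^{3/2})=O(\sqrt h)$ coming from $(\ref{idea})$, and using that each increment is an exit time of $X$ from an interval of width $O(\sqrt h)$ and hence has conditional variance $O(h^2)$, Doob's inequality gives $\mathbb E\big[\max_{0\le k\le n}|\vartheta^{(n)}_k-kh|^2\big]=O(nh^2)=O(h)$; in particular $\mathbb E|\vartheta^{(n)}_n-T|^2=O(h)$. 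The exponential tails of such exit times together with a union bound give $\mathbb E\big[\max_{0\le k<n}(\vartheta^{(n)}_{k+1}-\vartheta^{(n)}_k)^2\big]=O(h^2|\log h|^2)$. I will also use the Burkholder–Davis–Gundy bound: for $\mathcal F$–stopping times $S_1\le S_2$,
\[
\mathbb E\Big[\sup_{S_1\le t\le S_2}|X_t-X_{S_1}|\Big]\le C\,\big(\mathbb E[S_2-S_1]\big)^{1/2},
\]
obtained by applying BDG to the (stopped) martingale part of $X$ in the filtration $(\mathcal F_{S_1+t})_t$ and using the boundedness of $\mu,\sigma$; and the uniform moment bound $\mathbb E\big[\sup_{0\le k\le n}|X_{\vartheta^{(n)}_k}|^2\big]=O(1)$, which follows from $\mathbb E[\vartheta^{(n)}_n]=T+O(\sqrt h)$ and standard martingale estimates.

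For $\tilde V_n\le V+O(n^{-1/4})$ I test the continuous problem with $\tilde\tau^{*}_n=T\wedge\vartheta^{(n)}_{\tilde\eta^{*}_n}\in\mathcal T_{[0,T]}$, legitimate because $\tilde\eta^{*}_n$ is a stopping time of $\sigma(X_{\vartheta^{(n)}_1},\dots,X_{\vartheta^{(n)}_k})\subseteq\mathcal F_{\vartheta^{(n)}_k}$. Since $\tilde V_n=\mathbb E[f(\tilde\eta^{*}_nh,X_{\vartheta^{(n)}_{\tilde\eta^{*}_n}})]$ and $V\ge\mathbb E[f(\tilde\tau^{*}_n,X_{\tilde\tau^{*}_n})]$, the hypothesis $(\ref{2.function})$ reduces the gap to bounding $\mathbb E\big[(1+|X_{\vartheta^{(n)}_{\tilde\eta^{*}_n}}|)\,|\tilde\eta^{*}_nh-\tilde\tau^{*}_n|\big]$ and $\mathbb E|X_{\vartheta^{(n)}_{\tilde\eta^{*}_n}}-X_{\tilde\tau^{*}_n}|$. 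For the first, $|\tilde\eta^{*}_nh-\tilde\tau^{*}_n|\le\max_k|kh-\vartheta^{(n)}_k|+|\vartheta^{(n)}_n-T|$, so this is $O(\sqrt h)$ by the embedding estimates, the moment bound and Cauchy–Schwarz. For the second, $X_{\vartheta^{(n)}_{\tilde\eta^{*}_n}}\neq X_{\tilde\tau^{*}_n}$ only on $\{\vartheta^{(n)}_{\tilde\eta^{*}_n}>T\}\subseteq\{\vartheta^{(n)}_n>T\}$, on which $|X_{\vartheta^{(n)}_{\tilde\eta^{*}_n}}-X_{\tilde\tau^{*}_n}|\le\sup_{T\le t\le\vartheta^{(n)}_n}|X_t-X_T|$; by the BDG bound and $\mathbb E|\vartheta^{(n)}_n-T|=O(\sqrt h)$ this is $O(h^{1/4})$. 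Hence $\tilde V_n-V=O(n^{-1/4})$; the very same estimate shows $|\tilde V_n-\mathbb E[f(\tilde\tau^{*}_n,X_{\tilde\tau^{*}_n})]|=O(n^{-1/4})$, which, combined with the reverse bound below, yields $V-\mathbb E[f(\tilde\tau^{*}_n,X_{\tilde\tau^{*}_n})]=O(n^{-1/4})$.

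The bound $V\le\tilde V_n+O(n^{-1/4})$ is the heart of the matter, the obstacle being that a continuous stopping time is not measurable for the discrete filtration. I overcome this in two steps. First, by the strong Markov property of $X$ and the recursion $(\ref{2.sstpt})$, the embedded chain $(X_{\vartheta^{(n)}_k})_{k=0}^n$ is a Markov chain, with one–step weights $q^{(i)}$, not only for its own filtration but also for the larger filtration $(\mathcal F_{\vartheta^{(n)}_k})_{k=0}^n$; therefore $(\mathcal J^{(n)}_k(X_{\vartheta^{(n)}_k}))_k$ is a $(\mathcal F_{\vartheta^{(n)}_k})$–supermartingale dominating $(f(kh,X_{\vartheta^{(n)}_k}))_k$, and the optional sampling theorem gives that $\tilde V_n$ also equals the supremum of $\mathbb E[f(\eta h,X_{\vartheta^{(n)}_\eta})]$ over all $\{0,\dots,n\}$–valued $(\mathcal F_{\vartheta^{(n)}_k})$–stopping times $\eta$. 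Second, given $\varepsilon>0$ choose $\tau\in\mathcal T_{[0,T]}$ with $\mathbb E[f(\tau,X_\tau)]\ge V-\varepsilon$ and set $\eta:=n\wedge\min\{k:\vartheta^{(n)}_k\ge\tau\}$; since $\{\tau\le\vartheta^{(n)}_k\}\in\mathcal F_{\vartheta^{(n)}_k}$ for the two $\mathcal F$–stopping times $\tau,\vartheta^{(n)}_k$, $\eta$ is admissible and $\tilde V_n\ge\mathbb E[f(\eta h,X_{\vartheta^{(n)}_\eta})]$. It remains to bound $\mathbb E[f(\tau,X_\tau)]-\mathbb E[f(\eta h,X_{\vartheta^{(n)}_\eta})]$ via $(\ref{2.function})$, splitting along the partition $\{\vartheta^{(n)}_n\ge\tau\}\cup\{\vartheta^{(n)}_n<\tau\}$. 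On $\{\vartheta^{(n)}_n\ge\tau\}$ one has $\tau\in(\vartheta^{(n)}_{\eta-1},\vartheta^{(n)}_\eta]$ (the case $\eta=0$, forcing $\tau=0$, being trivial), hence $0\le\vartheta^{(n)}_\eta-\tau\le\vartheta^{(n)}_\eta-\vartheta^{(n)}_{\eta-1}$ and, since $X$ stays within $\bar\sigma\sqrt h$ of its starting point during a single embedding step, $|X_\tau-X_{\vartheta^{(n)}_\eta}|\le2\bar\sigma\sqrt h$; together with $|\eta h-\vartheta^{(n)}_\eta|\le\max_k|kh-\vartheta^{(n)}_k|$ and the embedding estimates this contributes $O(\sqrt h)$. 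On $\{\vartheta^{(n)}_n<\tau\}$ one has $\eta=n$ and $\vartheta^{(n)}_n<\tau\le T$, so $|\eta h-\tau|\le T-\vartheta^{(n)}_n$ and $|X_\tau-X_{\vartheta^{(n)}_n}|\le\sup_{\vartheta^{(n)}_n\le t\le T}|X_t-X_{\vartheta^{(n)}_n}|$, which by the BDG bound and $\mathbb E|\vartheta^{(n)}_n-T|=O(\sqrt h)$ contributes $O(h^{1/4})$. Letting $\varepsilon\downarrow0$ closes the estimate.

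It remains to note that the absorbing set $\{B,C\}$ is treated consistently in the two models — there the embedding keeps $X$ frozen while advancing time by $h$, and the scheme optimizes $\max_{k\le m\le n}f(mh,z)$, i.e.\ over a mesh of $[\vartheta^{(n)}_k\wedge T,T]$ — so that the induced discrepancies are $O(h)$ through the time–Lipschitz part of $(\ref{2.function})$ and are absorbed into $O(n^{-1/4})$. The main difficulty is the second direction: legitimately enlarging the admissible class to $(\mathcal F_{\vartheta^{(n)}_k})$–stopping times via the Markov property of the embedded chain, so that a genuinely continuous $\tau$ can be transcribed into an admissible $\eta$, and then controlling the maturity overshoot $\sup_{\vartheta^{(n)}_n\le t\le T}|X_t-X_{\vartheta^{(n)}_n}|$; its order $(\mathbb E|\vartheta^{(n)}_n-T|)^{1/2}=O(h^{1/4})$ is exactly what pins the global rate at $O(n^{-1/4})$.
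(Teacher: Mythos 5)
Your proposal is correct and follows essentially the same route as the paper: the same two one-sided bounds via the strong Markov property (enlarging the admissible discrete stopping times to $(\mathcal F_{\vartheta^{(n)}_k})$-stopping times), the same transcription $\eta=n\wedge\min\{k:\vartheta^{(n)}_k\geq\tau\}$, the same key embedding estimate $\mathbb E\left(\max_{k}|\vartheta^{(n)}_k-kh|^m\right)=O(h^{m/2})$ obtained from the martingale decomposition of the centered increments together with the exit-time moment bound of Lemma \ref{lem4.1}, and the same Burkholder--Davis--Gundy control of the maturity overshoot that pins the rate at $O(n^{-1/4})$. The only cosmetic difference is that you invoke exponential tails for $\max_k(\vartheta^{(n)}_{k+1}-\vartheta^{(n)}_k)$ where the paper uses the simpler pathwise bound $\max_k(\vartheta^{(n)}_{k+1}-\vartheta^{(n)}_k)\leq 2\max_k|\vartheta^{(n)}_k-kh|+h$, which already suffices.
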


\begin{rem}
Theorems \ref{thm2.1} and \ref{thm2.2} can be extended with the same error estimates to the setup of
Dynkin games which are corresponding to game options (see \cite{Ki1,Ki2}).
The dynamical programming in discrete time
can be done in a similar way by applying
the results from \cite{O}. Moreover, as in the American options case,
the Skorokhod embedding technique
allows to lift the rational times from the
discrete setup to the continuous one. Since the proof for Dynkin games is very similar to our setup,
then for simplicity, in this work we focus on optimal stopping and the pricing of American options.
\end{rem}
\section{Proof of Theorem \ref{thm2.1}}\label{sec:3}\setcounter{equation}{0}
\begin{proof}
Fix $n\in\mathbb N$.
Recall the definition of
$\theta^{(n)}_k,\mathcal T_n,\eta^{*}_n$ from Section \ref{sec:2.1}.
Denote by $\mathbb T_n$ the set of all stopping
times with respect to the filtration
$\{\mathcal F_{\theta^{(n)}_k}\}_{k=0}^n$, with values in $\{0,1,...,n\}$.
Clearly, $\mathcal T_n\subset\mathbb T_n$.
From the strong Markov property of the diffusion $X$ it follows that
\begin{equation}\label{3.0}
V_n=\sup_{\eta\in\mathbb T_n}\mathbb E [f(\eta_n h,X^{(n)}_{\eta_n})]
=\mathbb E[f(\eta^{*}_n h,X^{(n)}_{\eta^{*}_n})].
\end{equation}
Define the function
$\phi_n:\mathcal T_{[0,T]}\rightarrow\mathbb T_n$
by
$\phi_n(\tau)=n\wedge\min\{k:\theta^{(n)}_k\geq \tau\}.$
The equality (\ref{3.0}) implies that
$V_n\geq \sup_{\tau\in\mathcal T^X_{[0,T]}} \mathbb E[f(\phi_n(\tau) h,X^{(n)}_{\phi_n (\tau)})]$.
Hence, from
(\ref{2.function}) we obtain
\begin{align}\label{3.1}
V&\leq V_n+O(1) \sup_{\tau\in\mathcal T_{[0,T]}} \mathbb E |X_{\tau}-X_{\theta^{(n)}_{\phi_n(\tau)}}|
+O(1)\mathbb E\left(\max_{0\leq i\leq n}|X^{(n)}_i-X_{\theta^{(n)}_i}|\right)\\
&+O(1)\mathbb E\left(\sup_{0\leq t\leq T}X_t
\left(\max_{1\leq i\leq n}|\theta^{(n)}_i-i h|+
\max_{1\leq i\leq n}(\theta^{(n)}_i-\theta^{(n)}_{i-1})\right)\right).\nonumber
\end{align}
Next, recall the definition of $\tau^{*}_n$ and observe that
\begin{align}\label{3.2}
V\geq& \mathbb E[f(\tau^{*}_n,X_{\tau^{*}_n})]\geq V_n-
O(1)\mathbb E\left(\max_{0\leq i\leq n}|X^{(n)}_i-X_{\theta^{(n)}_i}|\right)\\
-&O(1)\sup_{\theta^{(n)}_n\wedge T\leq t\leq \theta^{(n)}_n\vee T}|X_t-X_{\theta^{(n)}_n\wedge T}|-
O(1)\mathbb E\left(\sup_{0\leq t\leq T}X_t
\max_{1\leq i\leq n}|\theta^{(n)}_i-i h|\right).\nonumber
\end{align}
Let $\Theta:=\inf\{t:X_t\notin (B,C)\}$ be the absorbing time.
From the Burkholder--Davis--Gundy inequality and the inequality
$(a+b)^m\leq 2^m (a^m+b^m)$, $a,b\geq 0$, $m>0$
it follows that for any $m>1$
and stopping times $\varsigma_1\leq \varsigma_2$
\begin{equation}\label{BDG}
\begin{split}
\mathbb E\left(\sup_{\varsigma_1\leq t\leq \varsigma_2}|X_t-X_s|^m\right) &\leq 2^m\mathbb E\left(\sup_{\varsigma_1\wedge\Theta\leq t\leq \varsigma_2\wedge\Theta}
|M_t-M_{\varsigma_1\wedge\Theta}|^m+
||\mu||_{\infty}^m(\varsigma_2\wedge\Theta-\varsigma_1\wedge\Theta)^m\right)\\
&=O(1)\mathbb E\left(\left|\int_{\varsigma_1\wedge\Theta}^{\varsigma_2\wedge\Theta} \sigma^2(X_t) dt\right|^{m/2}+(\varsigma_2-\varsigma_1)^m\right)\\
&=O(1)\mathbb E \left((\varsigma_2-\varsigma_1)^{m/2}+(\varsigma_2-\varsigma_1)^m\right).
\end{split}
\end{equation}
Observe that
for any stopping time $\tau\in\mathcal T_{[0,T]}$,
$|\tau-\theta^{(n)}_{\phi_n(\tau)}|\leq \max_{1\leq i\leq n}(\theta^{(n)}_i-\theta^{(n)}_{i-1})+|T-\theta^{(n)}_n|$. Moreover,
$2\max_{1\leq i\leq n}|\theta^{(n)}_i-i h|+h\geq \max_{1\leq i\leq n}(\theta^{(n)}_i-\theta^{(n)}_{i-1})$.
Thus Theorem \ref{thm2.1} follows from (\ref{3.1})--(\ref{BDG}), the Cauchy--Schwarz inequality, the Jensen inequality
and Lemmas \ref{lem3.1}--\ref{lem3.2} below.
\end{proof}

\subsection{Technical estimates for the proof of Theorem \ref{thm2.1}}

The next lemma is a technical step in proving Lemmas \ref{lem3.1}--\ref{lem3.2}
which are the main results of this subsection, which are then used for the proof of Theorem \ref{thm2.1}.
\begin{lem}\label{important}
Recall the definition of $\Theta_n$ given after (\ref{2.12}). For any $m>0$
$$\mathbb E\left(\max_{0\leq k\leq\Theta_n}|Y_{\theta^{(n)}_k}-X^{(n)}_k|^m\right)=O(h^{m/2}).$$
\end{lem}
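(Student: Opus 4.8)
The plan is to estimate the discrepancy between $X^{(n)}_k$ and $Y_{\theta^{(n)}_k}$ prior to absorption by exploiting the Skorohod embedding relation (\ref{2.13}) and reducing everything to moment bounds on the increments $\theta^{(n)}_{k+1}-\theta^{(n)}_k$. First I would recall that on the event $k<\Theta_n$ we have the exact identity
\[
M_{\theta^{(n)}_{k+1}}-M_{\theta^{(n)}_k}=\sqrt h(1+\alpha^{(n)}_{k+1})\xi^{(n)}_{k+1}-\sqrt h\alpha^{(n)}_k\xi^{(n)}_k-h\mu(X^{(n)}_k),
\]
so that summing gives $M_{\theta^{(n)}_k}-x=X^{(n)}_k-x+\sqrt h(\alpha^{(n)}_k\xi^{(n)}_k-\alpha^{(n)}_0\xi^{(n)}_0)-h\sum_{i=0}^{k-1}\mu(X^{(n)}_i)$; that is, $M_{\theta^{(n)}_k}=\tilde X^{(n)}_k$, the martingale that was embedded. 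Since $\alpha^{(n)}$ is uniformly bounded (Assumption~\ref{asm2.1}) and $\mu$ is bounded, we get $|M_{\theta^{(n)}_k}-X^{(n)}_k|\leq O(\sqrt h)+O(1)\,h k\leq O(1)$ deterministically, but that crude bound is not yet of order $h^{m/2}$; the point is that $|X^{(n)}_k-M_{\theta^{(n)}_k}|\leq \sqrt h|\alpha^{(n)}_k|+\sqrt h|\alpha^{(n)}_0| + h\sum_{i=0}^{k-1}|\mu(X^{(n)}_i)|$, and the genuinely large term is $h\sum_{i<k}\mu(X^{(n)}_i)$, which we instead rewrite against $Y$.

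Next I would bring in $Y$ rather than $M$. By definition $Y_t=M_t+\int_0^t\mu(Y_s)\,ds$, so on $\{k<\Theta_n\}$,
\[
Y_{\theta^{(n)}_k}-X^{(n)}_k=\bigl(M_{\theta^{(n)}_k}-X^{(n)}_k\bigr)+\int_0^{\theta^{(n)}_k}\mu(Y_s)\,ds
=\sqrt h\bigl(\alpha^{(n)}_k\xi^{(n)}_k-\alpha^{(n)}_0\xi^{(n)}_0\bigr)+\int_0^{\theta^{(n)}_k}\mu(Y_s)\,ds-h\sum_{i=0}^{k-1}\mu(X^{(n)}_i).
\]
The first term is $O(\sqrt h)$ uniformly. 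For the remaining difference of integrals, write $\int_0^{\theta^{(n)}_k}\mu(Y_s)ds-h\sum_{i=0}^{k-1}\mu(X^{(n)}_i)=\sum_{i=0}^{k-1}\int_{\theta^{(n)}_i}^{\theta^{(n)}_{i+1}}\bigl(\mu(Y_s)-\mu(X^{(n)}_i)\bigr)ds+\sum_{i=0}^{k-1}\mu(X^{(n)}_i)\bigl(\theta^{(n)}_{i+1}-\theta^{(n)}_i-h\bigr)$. For the first sum, Lipschitz continuity of $\mu$ bounds $|\mu(Y_s)-\mu(X^{(n)}_i)|$ for $s\in[\theta^{(n)}_i,\theta^{(n)}_{i+1}]$ by $L|Y_s-Y_{\theta^{(n)}_i}|+L|Y_{\theta^{(n)}_i}-X^{(n)}_i|$; the oscillation of $Y$ on $[\theta^{(n)}_i,\theta^{(n)}_{i+1}]$ is controlled by (\ref{BDG})-type estimates in terms of $(\theta^{(n)}_{i+1}-\theta^{(n)}_i)^{1/2}$, and $|Y_{\theta^{(n)}_i}-X^{(n)}_i|$ is exactly the quantity we are estimating — so this is the recursive/Gronwall part. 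For the second sum, $\mu$ is bounded and one needs $L^m$-control of $\max_k|\sum_{i<k}(\theta^{(n)}_{i+1}-\theta^{(n)}_i-h)|$.

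The crux is therefore a moment estimate on the embedding increments: I would show that $\mathbb E\bigl((\theta^{(n)}_{k+1}-\theta^{(n)}_k)^p\mid\mathcal F_{\theta^{(n)}_k}\bigr)=O(h^p)$ for every $p\geq 1$, using that $\theta^{(n)}_{k+1}-\theta^{(n)}_k$ is the exit time of the martingale $M_{\cdot+\theta^{(n)}_k}-M_{\theta^{(n)}_k}$ (time-changed Brownian motion with volatility bounded above and below by Assumption~\ref{asm2.1}) from an interval of length $O(\sqrt h)$; exit times of Brownian motion from an interval of width $\delta$ have all moments of order $\delta^2$, and the quadratic-variation bounds transfer this to $M$. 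From this, $\sum_{i<k}(\theta^{(n)}_{i+1}-\theta^{(n)}_i-h)$ is a martingale (in $k$) with increments having conditional second moment $O(h^2)$, so Doob/BDG gives $\mathbb E\max_k|\sum_{i<k}(\theta^{(n)}_{i+1}-\theta^{(n)}_i-h)|^m=O(h^{m/2})$ (using $n h=T$). Feeding these bounds into the decomposition above and applying a discrete Gronwall argument to the recursive term — with a supremum over $k$ handled by Doob's maximal inequality at each stage, after first establishing the estimate for a fixed $k$ and then upgrading to the maximum — yields $\mathbb E\max_{0\leq k\leq\Theta_n}|Y_{\theta^{(n)}_k}-X^{(n)}_k|^m=O(h^{m/2})$. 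The main obstacle I anticipate is organizing the Gronwall step cleanly at the level of $m$-th moments of the running maximum (rather than just fixed-$k$ moments), since the naive iteration loses the maximum; the fix is to bound $\mathbb E\max_{k\leq K}|Y_{\theta^{(n)}_k}-X^{(n)}_k|^m$ by a constant times $h^{m/2}$ plus a constant times $h\sum_{k\leq K}\mathbb E\max_{j\leq k}|Y_{\theta^{(n)}_j}-X^{(n)}_j|^m$ and invoke the discrete Gronwall inequality, the $h$ prefactor on the sum combining with $nh=T$ to keep the constant bounded in $n$.
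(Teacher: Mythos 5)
Your decomposition is essentially the paper's: the identity $M_{\theta^{(n)}_k}=\tilde X^{(n)}_k$ from summing (\ref{2.13}), the reduction to the difference between $h\sum_{i<k}\mu(X^{(n)}_i)$ and $\int_0^{\theta^{(n)}_k}\mu(Y_s)ds$, the splitting into an oscillation/Lipschitz term that feeds a discrete Gronwall recursion and a term driven by $\theta^{(n)}_{i+1}-\theta^{(n)}_i-h$, the moment bound $\mathbb E\bigl((\theta^{(n)}_{i+1}-\theta^{(n)}_i)^{p}\mid\mathcal F_{\theta^{(n)}_i}\bigr)=O(h^{p})$ via BDG and the $O(\sqrt h)$ width of the exit interval, and the Gronwall step at the level of the running maximum are all exactly what the paper does.

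There is, however, one genuine gap, and it sits at the heart of the argument: you assert that $\sum_{i<k}\bigl(\theta^{(n)}_{i+1}-\theta^{(n)}_i-h\bigr)$ is a martingale, i.e.\ that $\mathbb E\bigl(\theta^{(n)}_{i+1}-\theta^{(n)}_i\mid\mathcal F_{\theta^{(n)}_i}\bigr)=h$ exactly. Nothing in the construction guarantees this: the embedding prescribes the \emph{spatial} exit levels of $M$, not the mean exit time, and the moment bound $O(h^{p})$ you establish only yields $\mathbb E\bigl(\theta^{(n)}_{i+1}-\theta^{(n)}_i\mid\mathcal F_{\theta^{(n)}_i}\bigr)=O(h)$; summing $n=T/h$ biases of size $O(h)$ gives $O(1)$, which is useless. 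The correct statement is that the conditional mean equals $h$ only up to a bias $J_i=O(h^{3/2})+O(h)\,U_i+O(1)\mathbb E(L_i\mid\mathcal F_{\theta^{(n)}_i})$, and proving this is the central computation of the paper's proof, (\ref{3.14})--(\ref{3.16}): one applies the It\^o isometry to $(M_{\theta^{(n)}_{i+1}}-M_{\theta^{(n)}_i})^2$, uses the variance-matching property of the perturbed walk to identify $\mathbb E\bigl(\int_{\theta^{(n)}_i}^{\theta^{(n)}_{i+1}}\sigma^2(Y_t)dt\mid\mathcal F_{\theta^{(n)}_i}\bigr)=h\sigma^2(X^{(n)}_i)+O(h^{3/2})$, and then replaces $\sigma^2(Y_t)$ by $\sigma^2(X^{(n)}_i)$ at the cost of the oscillation of $Y$ and of $|Y_{\theta^{(n)}_i}-X^{(n)}_i|=U_i$ — which is why the bias itself must be fed back into the Gronwall recursion, and why $\sigma$ being bounded away from zero and Lipschitz is needed here. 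You must therefore center the sum at the true conditional expectation (the paper's $I_i$), treat the resulting genuine martingale with BDG as you propose, and handle the accumulated bias $\sum_{i<k}J_i=O(\sqrt h)+O(h)\sum_{i<k}U_i+O(1)\sum_i\mathbb E(L_i\mid\mathcal F_{\theta^{(n)}_i})$ separately; as written, your proof of the ``second sum'' does not close.
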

\begin{proof}
From the Jensen inequality it follows that it is sufficient to prove the claim
for $m>2$. Fix $m>2$ and $n\in\mathbb N$. We will apply the discrete version of the Gronwall inequality.
 Introduce the random variables $U_k:=\mathbb{I}_{k\leq\Theta_n}|X^{(n)}_{k}-Y_{\theta^{(n)}_k}|$.
Since the process $\alpha^{(n)}$ is uniformly bounded and $\mu$ is Lipschitz continuous,
then from (\ref{2.3+}) and (\ref{2.13}) we obtain
that
\begin{align}\label{3.9+}
U_k=& O(\sqrt h)+ \mathbb{I}_{k\leq\Theta_n}\left|h \sum_{i=0}^{k-1} \mu(X^{(n)}_i)-\sum_{i=0}^{k-1}
\int_{\theta^{(n)}_{i}}^{\theta^{(n)}_{i+1}}\mu(Y_t)dt\right|\\
\leq&O(\sqrt h)+ \mathbb{I}_{k\leq\Theta_n}\left|h \sum_{i=0}^{k-1} \mu(X^{(n)}_i)-\sum_{i=0}^{k-1}
\mu(Y_{\theta^{(n)}_i})(\theta^{(n)}_{i+1}-\theta^{(n)}_i)\right|\nonumber\\
+&O(1)\sum_{i=0}^{k-1}\mathbb I_{i<\Theta_n}\sup_{\theta^{(n)}_i\leq t\leq \theta^{(n)}_{i+1}}
|Y_t-Y_{\theta^{(n)}_i}|(\theta^{(n)}_{i+1}-\theta^{(n)}_i)\nonumber\\
\leq&O(\sqrt h)+O(1)\sum_{i=0}^{k-1}L_i+
O(h)\sum_{i=0}^{k-1}U_i+|\sum_{i=0}^{k-1}(I_i+J_i)| \nonumber
\end{align}
where
\begin{align}
I_i:&=\mathbb{I}_{i<\Theta_n}\mu(Y_{\theta^{(n)}_i})\left(\theta^{(n)}_{i+1}-\theta^{(n)}_i-\mathbb E(\theta^{(n)}_{i+1}-\theta^{(n)}_i|\mathcal F_{\theta^{(n)}_i})\right),
\nonumber \\
J_i:&=\mathbb{I}_{i<\Theta_n}\mu(Y_{\theta^{(n)}_i})\left(\mathbb E(\theta^{(n)}_{i+1}-\theta^{(n)}_i|\mathcal F_{\theta^{(n)}_i})-h\right), \nonumber  \\
L_i:&=\mathbb{I}_{i<\Theta_n}\sup_{\theta^{(n)}_i\leq t\leq \theta^{(n)}_{i+1}}|Y_t-Y_{\theta^{(n)}_i}|(\theta^{(n)}_{i+1}-\theta^{(n)}_i). \nonumber
\end{align}
Next, from (\ref{2.7}), (\ref{2.13}) and the It\^{o} Isometry
it follows that on the event $i<\Theta_n$
\begin{align}\label{3.14}
\mathbb E &
\left(\int_{\theta^{(n)}_i}^{\theta^{(n)}_{i+1}}\sigma^2(Y_t)dt|\mathcal F_{\theta^{(n)}_i}\right)=
\mathbb E\left((M_{\theta^{(n)}_{i+1}}-M_{\theta^{(n)}_i})^2 |\mathcal F_{\theta^{(n)}_i}\right)\\
=&h(1+2\alpha^{(n)}_{i+1})+h\left((\alpha^{(n)}_{i+1})^2-(\alpha^{(n)}_{i})^2\right)+O(h^{3/2})
=
h\sigma^2(X^{(n)}_i)+O(h^{3/2}).\nonumber
\end{align}
On the other hand, the function $\sigma$ is bounded and Lipschitz, and so
\begin{align}\label{3.14+}
\mathbb E\left(\int_{\theta^{(n)}_i}^{\theta^{(n)}_{i+1}}\sigma^2(Y_t)dt|\mathcal F_{\theta^{(n)}_i}\right)=&
\sigma^2(X^{(n)}_i)\mathbb E(\theta^{(n)}_{i+1}-\theta^{(n)}_i|\mathcal F_{\theta^{(n)}_i})\\
+&O(1)\mathbb E\left(\sup_{\theta^{(n)}_i\leq t\leq \theta^{(n)}_{i+1}}
|Y_t-Y_{\theta^{(n)}_i}|(\theta^{(n)}_{i+1}-\theta^{(n)}_i)|\mathcal F_{\theta^{(n)}_i}\right)
\nonumber\\
+&O(1)\left|X^{(n)}_i-Y_{\theta^{(n)}_i}\right|\mathbb E\left(\theta^{(n)}_{i+1}-\theta^{(n)}_i|\mathcal F_{\theta^{(n)}_i}\right).\nonumber
\end{align}
From (\ref{3.14})--(\ref{3.14+}) and the fact that $\sigma$ bounded away from zero we get
that on the event $i<\Theta_n$
\begin{eqnarray}\label{3.15}
&\mathbb E&(\theta^{(n)}_{i+1}-\theta^{(n)}_i|\mathcal F_{\theta^{(n)}_i})
\\ &=&h+O(h^{3/2})+O(1)\mathbb E\left(\sup_{\theta^{(n)}_i\leq t\leq \theta^{(n)}_{i+1}}
|Y_t-Y_{\theta^{(n)}_i}|(\theta^{(n)}_{i+1}-\theta^{(n)}_i)|\mathcal F_{\theta^{(n)}_i}\right) \nonumber \\
&+&O(1)\left|X^{(n)}_i-Y_{\theta^{(n)}_i}\right|\mathbb E\left(\theta^{(n)}_{i+1}-\theta^{(n)}_i|\mathcal F_{\theta^{(n)}_i}\right).\nonumber
\end{eqnarray}
Clearly, (\ref{3.14}) implies that ($\sigma$ is bounded away from zero)
on the event $i<\Theta_n$,
$\mathbb E(\theta^{(n)}_{i+1}-\theta^{(n)}_i|\mathcal F_{\theta^{(n)}_i})=O(h).$
This together with (\ref{3.15}) gives ($\mu$ is bounded)
\begin{equation}\label{3.16}
|J_i|= O(h^{3/2})+O(h)U_i+O(1)
\mathbb E(L_i|\mathcal F_{\theta^{(n)}_i}).
\end{equation}
From (\ref{3.9+}) and (\ref{3.16}) we obtain that for any $k=1,...,n$
\begin{eqnarray*}
\max_{0\leq j\leq k}U_j&=&
O(\sqrt h)+ O(h)
\sum_{i=0}^{k-1}\max_{0\leq j\leq i}U_j \\
&+&\max_{0\leq k\leq n-1}|\sum_{i=0}^ k I_i|+ O(1)
\sum_{i=0}^{n-1} L_i+O(1)
\sum_{i=0}^{n-1} \mathbb E(L_i|\mathcal F_{\theta^{(n)}_i}).
\end{eqnarray*}
Next, recall the following inequality, which is a direct consequence of the Jensen's inequality,
\begin{equation}\label{convex}
(\sum_{i=1}^n a_i)^{\tilde m}\leq n^{\tilde m-1}\sum_{i=1}^n a^{\tilde m}_i,
 \  \
a_1,...,a_n\geq 0, \  \tilde m\geq 1.
\end{equation}
Using the above inequality along with Jensen's inequality we arrive at
\begin{eqnarray*}
\mathbb E\left(\max_{0\leq j\leq k}U^m_j\right)&=&O(h^{m/2})+ O(h) \sum_{i=0}^{k-1}
\mathbb E\left(\max_{0\leq j\leq i}U^m_j\right)\\&+&O(1)\mathbb E\left(\max_{0\leq k\leq n-1}|\sum_{i=0}^ k I_i|^m\right)+ O(n^{m-1})
\sum_{i=0}^{n-1} \mathbb E(L^m_i).\nonumber
\end{eqnarray*}
From the discrete version of Gronwall's inequality (see \cite{C})
\begin{eqnarray}\label{3.17}
&\mathbb E&\left(\max_{0\leq i\leq n}U^m_i\right)
 \\ &=&(1+ O(h))^n\times
\left(O(h^{m/2})+O(1)\mathbb E\left(\max_{0\leq k\leq n-1}|\sum_{i=0}^ k I_i|^m\right)+ O(n^{m-1})
\sum_{i=0}^{n-1} \mathbb E(L^m_i)\right) \nonumber \\
&=&O(h^{m/2})+O(1)\mathbb E\left(\max_{0\leq k\leq n-1}|\sum_{i=0}^ k I_i|^m\right)+ O(n^{m-1})
\sum_{i=0}^{n-1} \mathbb E(L^m_i).\nonumber
\end{eqnarray}
Next, we estimate
$\mathbb E\left(\max_{0\leq k\leq n-1}|\sum_{i=0}^ k I_i|^m\right)$
and $\mathbb E (L^m_i|\mathcal F_{\theta^{(n)}_i})$, $i=0,1,...,n-1.$
By applying the Burkholder–-Davis–-Gundy inequality
for the martingale $\{M_t-M_{\theta^{(n)}_i}\}_{t=\theta^{(n)}_i}^{\theta^{(n)}_{i+1}}$
it follows that for any $\tilde m>1/2$
\begin{align*}
\mathbb I_{i<\Theta_n}&\mathbb E\left(\left(\int_{\theta^{(n)}_i}^{\theta^{(n)}_{i+1}}\sigma^2(Y_t)dt\right)^{\tilde m}\bigg|\mathcal F_{\theta^{(n)}_i}\right)\\=&
O(1)\mathbb I_{i<\Theta_n}
\mathbb  E\left(\max_{\theta^{(n)}_i\leq t\leq \theta^{(n)}_{i+1}}\left(M_t-M_{\theta^{(n)}_i}\right)^{2\tilde m}
\bigg|\mathcal F_{\theta^{(n)}_i}\right)=O(h^{\tilde m})\mathbb I_{i<\Theta_n}
\end{align*}
where the last equality follows from
the fact that $$\mathbb I_{i<\Theta_n}\max_{\theta^{(n)}_i\leq t\leq \theta^{(n)}_{i+1}}|M_t-M_{\theta^{(n)}_i}|=
O(\sqrt h)\mathbb I_{i<\Theta_n}.$$
Since $\sigma$ is bounded away from zero we get
\begin{equation}\label{3.18}
\mathbb I_{i<\Theta_n}\mathbb E\left((\theta^{(n)}_{i+1}-\theta^{(n)}_i)^{\tilde m}|\mathcal F_{\theta^{(n)}_i}\right)
=O(h^{\tilde m})\mathbb I_{i<\Theta_n}, \ \ \tilde m>1/2.
\end{equation}
Next, observe that $\sum_{i=0}^k I_i$, $k=0,...,n-1$ is a martingale. From
the Burkholder��-Davis��-Gundy inequality, (\ref{convex}), (\ref{3.18}) and the fact that $\mu$ is bounded we conclude
\begin{eqnarray}\label{3.19}
\mathbb  E\left(\max_{0\leq k\leq n-1}|\sum_{i=0}^ k I_i|^m\right)&\leq& O(1)\mathbb E
 \left(\left(\sum_{i=0}^{n-1} I^2_i\right)^{m/2} \right)\\
&\leq& O(1) n^{m/2-1}\sum_{i=0}^{n-1}\mathbb E[|I_i|^m]=O(1)n^{m/2-1} n O(h^{m}) \nonumber
\\&=&O(h^{m/2}).\nonumber
\end{eqnarray}
Finally, we estimate $\mathbb E(L^m_i|\mathcal F_{\theta^{(n)}_i})$ for $i=0,1,...,n-1$. Clearly, on the event $i<\Theta_n$,
\begin{align*}
\sup_{\theta^{(n)}_i\leq t\leq \theta^{(n)}_{i+1}}|Y_t-Y_{\theta^{(n)}_i}|
\leq&\sup_{\theta^{(n)}_i\leq t\leq \theta^{(n)}_{i+1}}|M_t-M_{\theta^{(n)}_i}|+||\mu||_{\infty}(\theta^{(n)}_{i+1}-\theta^{(n)}_i)\\
=&
O(\sqrt h)+||\mu||_{\infty}(\theta^{(n)}_{i+1}-\theta^{(n)}_i).
\end{align*}
Hence, from (\ref{3.18})
we get
\begin{eqnarray}\label{3.20-}
\mathbb E(L^m_i|\mathcal F_{\theta^{(n)}_i})
&\leq& \mathbb I_{i<\Theta_n} \mathbb E
\left(O(h^{m/2})(\theta^{(n)}_{i+1}-\theta^{(n)}_i)^m+O(1)(\theta^{(n)}_{i+1}-\theta^{(n)}_i)^{2m}|\mathcal F_{\theta^{(n)}_i}\right)\\ &=&O(h^{3m/2}). \nonumber
\end{eqnarray}
This together with
(\ref{3.17}) and (\ref{3.19}) yields
$\mathbb E\left(\max_{0\leq i\leq n}U^m_i\right)=O(h^{m/2})$, and the result follows.
\end{proof}

\begin{lem}\label{lem3.1}
For any $m>0$
$$\mathbb E\left(\max_{1\leq k\leq n}|\theta^{(n)}_k- kh|^m\right)=O(h^{m/2}).$$
\end{lem}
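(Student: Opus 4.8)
The plan is first to reduce to the case $m\ge 2$ --- for $0<m<2$ the estimate follows from the case $m=2$ by Jensen's inequality, exactly as at the beginning of the proof of Lemma \ref{important} --- and then to exploit the telescoping decomposition
\[
\theta^{(n)}_k-kh=\sum_{i=0}^{k-1}\bigl(\theta^{(n)}_{i+1}-\theta^{(n)}_i-h\bigr)=\sum_{i=0}^{k-1}D_i+\sum_{i=0}^{k-1}R_i,
\]
where $D_i:=\theta^{(n)}_{i+1}-\theta^{(n)}_i-\mathbb E\bigl(\theta^{(n)}_{i+1}-\theta^{(n)}_i\,|\,\mathcal F_{\theta^{(n)}_i}\bigr)$ is a martingale difference for the filtration $\{\mathcal F_{\theta^{(n)}_k}\}_{k=0}^n$ and $R_i:=\mathbb E\bigl(\theta^{(n)}_{i+1}-\theta^{(n)}_i\,|\,\mathcal F_{\theta^{(n)}_i}\bigr)-h$ is its predictable counterpart. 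On $\{i\ge\Theta_n\}$ one has $\theta^{(n)}_{i+1}-\theta^{(n)}_i=h$ identically, so $D_i=R_i=0$ there and both sums effectively stop at $\Theta_n$.

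For the martingale part I would apply the Burkholder--Davis--Gundy inequality to $\sum_{i=0}^{k-1}D_i$, followed by the convexity inequality (\ref{convex}), to obtain
\[
\mathbb E\Bigl(\max_{1\le k\le n}\bigl|\sum_{i=0}^{k-1}D_i\bigr|^m\Bigr)\le O(1)\,n^{m/2-1}\sum_{i=0}^{n-1}\mathbb E|D_i|^m.
\]
Since $|D_i|\le(\theta^{(n)}_{i+1}-\theta^{(n)}_i)+\mathbb E\bigl(\theta^{(n)}_{i+1}-\theta^{(n)}_i\,|\,\mathcal F_{\theta^{(n)}_i}\bigr)$ and $D_i$ vanishes for $i\ge\Theta_n$, the conditional moment estimate (\ref{3.18}) (used with exponents $m$ and $1$) gives $\mathbb E|D_i|^m=O(h^m)$, so the right-hand side is $O\bigl(n^{m/2-1}\cdot n\cdot h^m\bigr)=O\bigl((nh)^{m/2}h^{m/2}\bigr)=O(h^{m/2})$ because $nh=T$.

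For the predictable part I would invoke the intermediate bound (\ref{3.15}) obtained inside the proof of Lemma \ref{important}: on $\{i<\Theta_n\}$, together with $\mathbb E(\theta^{(n)}_{i+1}-\theta^{(n)}_i\,|\,\mathcal F_{\theta^{(n)}_i})=O(h)$, it yields
\[
|R_i|\le\mathbb I_{i<\Theta_n}\Bigl(O(h^{3/2})+O(1)\,\mathbb E(L_i\,|\,\mathcal F_{\theta^{(n)}_i})+O(h)\,U_i\Bigr),
\]
with $U_i$ and $L_i$ as defined there. Summing over $i$, passing to $m$-th moments via (\ref{convex}) (and conditional Jensen for the term involving $\mathbb E(L_i\,|\,\mathcal F_{\theta^{(n)}_i})$), and using $\mathbb E(U_i^m)=O(h^{m/2})$ (Lemma \ref{important}) together with $\mathbb E(L_i^m)=O(h^{3m/2})$ (from (\ref{3.20-})), each of the three resulting contributions is $O\bigl(n^m h^{3m/2}\bigr)=O\bigl((nh)^m h^{m/2}\bigr)=O(h^{m/2})$. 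Combining the two parts with $(a+b)^m\le 2^m(a^m+b^m)$ completes the argument. I do not anticipate a genuine obstacle here: all the analytic work --- in particular controlling $\mathbb E(\theta^{(n)}_{i+1}-\theta^{(n)}_i\,|\,\mathcal F_{\theta^{(n)}_i})$ and the quantities $U_i,L_i$ --- has already been carried out in Lemma \ref{important}; the only points needing care are the bookkeeping of powers of $n$ and $h$ (so that the factor $(nh)^m=T^m$ absorbs the polynomial growth in $n$) and the preliminary reduction to $m\ge2$, which is what makes Burkholder--Davis--Gundy combine cleanly with the convexity inequality.
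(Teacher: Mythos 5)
Your proposal is correct and follows essentially the same route as the paper: the decomposition into the martingale differences $D_i$ and the predictable remainders $R_i$ is exactly the paper's redefinition of $I_i$ and $J_i$, the martingale part is handled by Burkholder--Davis--Gundy plus (\ref{convex}) and (\ref{3.18}) just as in (\ref{3.19}), and the predictable part is controlled via (\ref{3.16}), Lemma \ref{important} and (\ref{3.20-}) exactly as the paper does to get $\mathbb E|J_i|^m=O(h^{3m/2})$. The bookkeeping of powers of $n$ and $h$ and the reduction to large $m$ by Jensen also match the paper's argument.
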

\begin{proof}
Fix $m>2$ and $n\in\mathbb N$. Observe (recall the definition after (\ref{2.12})) that for $k\geq\Theta_n$ we have
$\theta^{(n)}_k-kh=\theta^{(n)}_{\Theta_n}-\Theta_n h$. Hence
\begin{equation}\label{3.new}
\max_{1\leq k\leq n}|\theta^{(n)}_k- kh|=\max_{1\leq k\leq \Theta_n}|\theta^{(n)}_k- kh|.
\end{equation}
Redefine the terms $I_i, J_i$ from Lemma \ref{important} as following
\begin{equation*}
I_i:=\mathbb{I}_{i<\Theta_n}\left(\theta^{(n)}_{i+1}-\theta^{(n)}_i-\mathbb E(\theta^{(n)}_{i+1}-\theta^{(n)}_i|\mathcal F_{\theta^{(n)}_i})\right),
\end{equation*}
\begin{equation*}
J_i:=\mathbb{I}_{i<\Theta_n}\left(\mathbb E(\theta^{(n)}_{i+1}-\theta^{(n)}_i|\mathcal F_{\theta^{(n)}_i})-h\right).
\end{equation*}
Then similarly to (\ref{3.18})--(\ref{3.19}) it follows that
$$\mathbb E\left(\max_{0\leq k\leq n-1}|\sum_{i=0}^ k I_i|^m\right)=O(h^{m/2}).$$
Moreover, by applying Lemma \ref{important}, (\ref{3.16}) and (\ref{3.20-})
we obtain that for any $i$, $\mathbb E[|J_i|^m]=O(h^{3m/2})$.
We conclude that
\begin{eqnarray*}
\mathbb E\left(\max_{1\leq k\leq \Theta_n}|\theta^{(n)}_k- kh|^m\right)&=&O(1)\mathbb E\left(\max_{0\leq k\leq n-1}|\sum_{i=0}^ k I_i|^m\right)+
O(1)\mathbb E\left(\max_{0\leq k\leq n-1}|\sum_{i=0}^ k J_i|^m\right)  \nonumber
\\ &=&O(h^{m/2}).
\end{eqnarray*}
This together with (\ref{3.new}) completes the proof.
\end{proof}
We end this section with establishing the following estimate.
\begin{lem}\label{lem3.2}
$$\mathbb E\left(\max_{0\leq k\leq n}|X_{\theta^{(n)}_k}-X^{(n)}_k|\right)=O(h^{1/4}).$$
\end{lem}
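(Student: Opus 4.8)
The plan is to split the running maximum at the discrete absorbing index $\Theta_n$ defined just after (\ref{2.12}), to write the absorbed diffusion as $X_{\theta^{(n)}_k}=Y_{\theta^{(n)}_k\wedge\Theta}$ with $\Theta:=\inf\{t:Y_t\notin(B,C)\}$, and to estimate the contributions of $\{k\le\Theta_n\}$ and $\{k\ge\Theta_n\}$ separately, after first removing a rare event on which the continuous and discrete absorption clocks decouple.

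\emph{Localization.} Let $\Omega_n$ be the event on which both $\max_{0\le k\le\Theta_n}|Y_{\theta^{(n)}_k}-X^{(n)}_k|\le\tfrac14 h^{1/3}$ and $\max_{1\le i\le\Theta_n}\sup_{\theta^{(n)}_{i-1}\le t\le\theta^{(n)}_i}|Y_t-Y_{\theta^{(n)}_{i-1}}|\le\tfrac14 h^{1/3}$ hold. Since $X^{(n)}_k\in[B_n,C_n]\subset(B+h^{1/3},C-h^{1/3})$ for every $k\le\Theta_n$, on $\Omega_n$ the path $Y$ stays strictly inside $(B,C)$ throughout $[0,\theta^{(n)}_{\Theta_n}]$, so $\Theta>\theta^{(n)}_{\Theta_n}$ and $X_{\theta^{(n)}_k}=Y_{\theta^{(n)}_k}$ for all $k\le\Theta_n$. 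The first defining quantity has $L^m$-norm $O(h^{1/2})$ by Lemma \ref{important}, and the second does too, using $\sup_{\theta^{(n)}_{i-1}\le t\le\theta^{(n)}_i}|Y_t-Y_{\theta^{(n)}_{i-1}}|\le C\sqrt h+\|\mu\|_\infty(\theta^{(n)}_i-\theta^{(n)}_{i-1})$, the bound $\max_i(\theta^{(n)}_i-\theta^{(n)}_{i-1})\le 2\max_i|\theta^{(n)}_i-ih|+h$, and Lemma \ref{lem3.1}; hence $\mathbb P(\Omega_n^c)=O(h^{m/6})$ for every $m>0$ by Markov's inequality. On $\Omega_n^c$ one bounds $\max_k|X_{\theta^{(n)}_k}-X^{(n)}_k|$ crudely by $\sup_{0\le t\le\theta^{(n)}_n}|X_t|+|x|+\sqrt{nT}$ and applies Cauchy--Schwarz using the (uniform in $n$) finiteness of $\mathbb E[\sup_{t\le T+1}X_t^2]$ and of the first two moments of $\theta^{(n)}_n$, together with $n=T/h$; taking $m$ large this contribution is $o(h^{1/4})$.

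\emph{The two regimes on $\Omega_n$.} First, $\max_{0\le k\le\Theta_n}|X_{\theta^{(n)}_k}-X^{(n)}_k|=\max_{0\le k\le\Theta_n}|Y_{\theta^{(n)}_k}-X^{(n)}_k|=O(h^{1/2})$ in $L^1$ by Lemma \ref{important}. For $k\ge\Theta_n$ we may assume $\Theta_n<n$ (otherwise there is nothing to prove), so $X^{(n)}_k=X^{(n)}_{\Theta_n}\in\{B_n,C_n\}$, and since $\theta^{(n)}_{\Theta_n}\le\theta^{(n)}_k\wedge\Theta\le\Theta$ on $\Omega_n$,
\[|X_{\theta^{(n)}_k}-X^{(n)}_k|\le|Y_{\theta^{(n)}_{\Theta_n}}-X^{(n)}_{\Theta_n}|+\sup_{\theta^{(n)}_{\Theta_n}\le t\le\Theta}\bigl|Y_t-Y_{\theta^{(n)}_{\Theta_n}}\bigr|,\]
the first term being at most $\max_{j\le\Theta_n}|Y_{\theta^{(n)}_j}-X^{(n)}_j|=O(h^{1/2})$. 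For the supremum I would condition on $\mathcal F_{\theta^{(n)}_{\Theta_n}}$ and use the strong Markov property: $Y$ restarts from a point $y$ within $O(h^{1/3})$ of the (necessarily finite) boundary point $c\in\{B,C\}$ towards which $X^{(n)}$ was absorbed, and because $\theta^{(n)}_n-\theta^{(n)}_{\Theta_n}=(n-\Theta_n)h\le T$ it is enough to estimate $\mathbb E^y[\sup_{0\le t\le\rho\wedge T}|Y_t-y|]$, where $\rho$ is the exit time of $(B,C)$. The oscillation of $Y$ on the side of $c$ is at most $|c-y|=O(h^{1/3})$ pathwise (the stopped path stays in $[B,C]$), while for the opposite side I would use that $Y$ has a scale function $s$ which is continuous, strictly increasing and locally bi-Lipschitz because $\mu/\sigma^2$ is bounded: $|s(c)-s(Y_{\cdot\wedge\rho})|$ is a nonnegative supermartingale, so the maximal inequality yields a tail $O\bigl(h^{1/3}/(h^{1/3}+\eta)\bigr)$ for the displacement away from $c$, whereas boundedness of $\mu,\sigma$ together with the truncation at $T$ yields a sub-Gaussian tail $O(e^{-c\eta^2})$ for large $\eta$; integrating the minimum of the two over $\eta\ge 0$ gives $\mathbb E^y[\sup_{t\le\rho\wedge T}|Y_t-y|]=O(h^{1/3}\log(1/h))=o(h^{1/4})$.

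\emph{Conclusion and the main obstacle.} Adding the $\Omega_n^c$-contribution, the before-$\Theta_n$ estimate and the after-$\Theta_n$ estimate gives $\mathbb E[\max_{0\le k\le n}|X_{\theta^{(n)}_k}-X^{(n)}_k|]=o(h^{1/4})+O(h^{1/2})+o(h^{1/4})=O(h^{1/4})$, which is the claim. The delicate point is exactly the post-absorption term: the naive estimate $\mathbb E^y[\sup_{t\le\rho}|Y_t-y|]\le(\mathbb E^y[\sup_{t\le\rho}|Y_t-y|^2])^{1/2}=O(\mathbb E^y[\rho]^{1/2})$ is at best $O(h^{1/6})$, and can be useless when $\mathbb E^y[\rho]=\infty$, so one has to exploit that the $h^{1/3}$-buffer built into $B_n,C_n$ pins the final excursion on the near side at $O(h^{1/3})$ while the far side has only a logarithmically inflated $O(h^{1/3})$ mean; when $B=-\infty$ or $C=\infty$ the truncation at the horizon $T$ and the sub-Gaussian estimate are what make the argument go through.
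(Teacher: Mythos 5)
Your proof is correct and shares the paper's overall architecture — localize on the event that the walk and the diffusion stay within $h^{1/3}$ of each other (your $\Omega_n$ is the complement of the paper's event $O=O_1\cup O_2$, and the $h^{1/3}$ buffer in $B_n,C_n$ is exploited in exactly the same way), handle $k\le\Theta_n$ by Lemma \ref{important}, and control the post-absorption excursion by a tail bound that degenerates as the restart point approaches the finite boundary. Where you genuinely diverge is the mechanism for that last tail bound: the paper passes to the measure $\mathbb Q$ under which $Y$ is a martingale via Girsanov, applies optional stopping to get $\mathbb Q(\sup_t|X_t-X_{\theta^{(n)}_{\Theta_n}}|\ge\epsilon\,|\,\mathcal F_{\theta^{(n)}_{\Theta_n}})\le 3n^{-1/3}/(3n^{-1/3}+\epsilon)$, integrates to $O(n^{-1/3}\ln n)$ under $\mathbb Q$, and then pays a H\"older-inequality toll (the exponent $4/5$) to convert back to $\mathbb P$, landing at $O((n^{-1/3}\ln n)^{4/5})=O(h^{1/4})$. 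You instead work directly under $\mathbb P$ with the scale function $p$ of \eqref{KS}: $p(Y_{\cdot\wedge\rho})-p(c)$ (signed toward the near boundary $c$) is a nonnegative supermartingale, the maximal inequality gives the same $h^{1/3}/(h^{1/3}+\eta)$ tail, and a sub-Gaussian bound from the truncation at horizon $T$ handles large $\eta$ when the far boundary is infinite. This buys you two things: no change of measure and no H\"older loss, so your post-absorption term is the sharper $O(h^{1/3}\log(1/h))$; it is also more in the spirit of the paper's own Section \ref{sec:2.2} computations with $p$. Two small points of hygiene rather than gaps: your displayed bound $\sup_{\theta^{(n)}_{\Theta_n}\le t\le\Theta}|Y_t-Y_{\theta^{(n)}_{\Theta_n}}|$ should be truncated at $\theta^{(n)}_n=\theta^{(n)}_{\Theta_n}+(n-\Theta_n)h$ (as your next sentence in fact does via $\rho\wedge T$), since otherwise the supremum can be infinite when $\Theta=\infty$; and the crude $\Omega_n^c$ estimate should invoke $\mathbb E[\sup_{0\le t\le\theta^{(n)}_n}|X_t|^2]$ (bounded uniformly in $n$ by Burkholder--Davis--Gundy and the moments of $\theta^{(n)}_n$, i.e.\ the paper's bound \eqref{moments} on $\Gamma_n$) rather than a supremum over a fixed time window.
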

\begin{proof}
Set $\Gamma_n=\sup_{0\leq t\leq\theta^{(n)}_n}|Y_t|+\max_{0\leq k\leq n}|X^{(n)}_k|$, $n\in\mathbb N$.
From Lemma \ref{lem3.1} it follows that
for any $m>1$, $\sup_{n\in\mathbb N}\mathbb E[(\theta^{(n)}_n)^m]<\infty$. Thus, from the Burkholder--Davis--Gundy inequality
and the fact that $\mu,\sigma$ are bounded we obtain
$$\sup_{n\in\mathbb N}\mathbb E[\sup_{0\leq t\leq\theta^{(n)}_n}|Y_t|^m]<\infty.$$
This together with Lemma \ref{important} gives that (recall that $X^{(n)}$ remains constant after $\Theta_n$)
\begin{equation}\label{moments}
\sup_{n\in\mathbb N}\mathbb E[\Gamma^m_n]<\infty \ \ \forall m>1.
\end{equation}
We start with estimating
$\mathbb E\left(\max_{0\leq k\leq \Theta_n}|X_{\theta^{(n)}_k}-X^{(n)}_k|\right)$.
Fix $n$ and introduce the events
\begin{align}
O&=\{\max_{0\leq k< \Theta_n}\sup_{\theta^{(n)}_k\leq t \leq\theta^{(n)}_{k+1}}|Y_t-X^{(n)}_k|\geq  n^{-1/3}\}, \nonumber\\
O_1&=\{\max_{0\leq k< \Theta_n}\sup_{\theta^{(n)}_k\leq t \leq\theta^{(n)}_{k+1}}|Y_t-Y_{\theta^{(n)}_k}|\geq n^{-1/3}/2\}, \nonumber\\
O_2&=\{\max_{1\leq k\leq \Theta_n}|X^{(n)}_k-Y_{\theta^{(n)}_k}|\geq n^{-1/3}/2\}. \nonumber
\end{align}
From Lemma \ref{important} (for $m=6$) and the Markov inequality we get
\begin{equation*}
\mathbb P(O_2)=O(h).
\end{equation*}
Observe that on the event $k<\Theta_n$,
$\sup_{\theta^{(n)}_k\leq t \leq\theta^{(n)}_{k+1}}|M_t-M_{\theta^{(n)}_k}|=O(\sqrt h)$. Hence,
from the fact that $\mu$ is uniformly bounded we obtain that
for sufficiently large $n$
\begin{equation*}
\mathbb P(O_1)\leq \sum_{i=0}^{n-1} \mathbb P\left(\mathbb{I}_{i<\Theta_n}(\theta^{(n)}_{i+1}-\theta^{(n)}_i)>h^{1/2}\right)\leq O(n) h^4/h^2=O(h)
\end{equation*}
where the last inequality follows form the Markov ineqaulity and
(\ref{3.18}) for $\tilde m=4$.
Clearly,
\begin{equation}\label{newformula}
\mathbb P(O)\leq \mathbb P(O_1)+\mathbb P(O_2)=O(h).
\end{equation}
From the simple inequalities
$B_n-B,C-C_n\geq n^{-1/3}$ it follows that
$\{\exists   k\leq\Theta_n: X_{\theta^{(n)}_k}\neq Y_{\theta^{(n)}_k}\}\subset O$.
Thus from
Lemma \ref{important}, (\ref{moments})--(\ref{newformula}) and the Cauchy--Schwarz
inequality it follows
\begin{eqnarray*}
\mathbb E\left(\max_{0\leq k\leq \Theta_n}|X_{\theta^{(n)}_k}-X^{(n)}_k|\right)&\leq&
\mathbb E\left(\max_{0\leq k\leq \Theta_n}|Y_{\theta^{(n)}_k}-X^{(n)}_k|\right)+
\mathbb E[2\Gamma_n\mathbb I_{O}] \\
&\leq& O(\sqrt h)+ O(1)\sqrt {\mathbb P(O)}=O(\sqrt h).
\end{eqnarray*}
It remains to estimate
$\mathbb E\left(\max_{\Theta_n<k\leq n}|X_{\theta^{(n)}_k}-X^{(n)}_k|\right)$.
Let $\mathbb Q\sim \mathbb P$ be the probability measure given by
$$Z_t:=\frac{d \mathbb Q}{d\mathbb P}{|\mathcal F_t}=\exp\left(-\int_{0}^t\frac{\mu(Y_u)}{\sigma(Y_u)}dW_u-\int_{0}^t\frac{\mu^2(Y_u)}{2\sigma^2(Y_u)}du\right), \ \ t\geq 0.$$
From Girsanov's theorem it follows that under the measure $\mathbb Q$,
$Y$ is a martingale. Assume that the martingale $Y$ starts at some $Y_0\in (B,C)$ and we want to estimate
$\mathbb Q(\max_{0\leq t\leq T}X_t-Y_0>\epsilon)$ for $\epsilon>0$.
Define a stopping time
$\varsigma=T\wedge \inf\{t: Y_t=Y_0+\epsilon\}\wedge\inf\{t: Y_t=B\}$. Then from the Optional stopping theorem
$$Y_0=\mathbb E_{\mathbb Q} [Y_{\varsigma}]\geq B(1-\mathbb Q(Y_{\varsigma}=Y_0+\epsilon))+(Y_0+\epsilon)\mathbb Q(Y_{\varsigma}=Y_0+\epsilon).$$
Hence, ($B$ is an absorbing barrier for $X$)
\begin{equation}\label{3.101}
\mathbb Q(\max_{0\leq t\leq T}X_t-Y_0>\epsilon)\leq\mathbb Q(Y_{\varsigma}=Y_0+\epsilon)\leq\frac{Y_0-B}{Y_0+\epsilon-B}.
\end{equation}
Similarly,
\begin{equation}\label{3.102}
\mathbb Q(\max_{0\leq t\leq T}Y_0-X_t>\epsilon)\leq\frac{C-Y_0}{C+\epsilon-Y_0}.
\end{equation}
Next, recall the event $O$ from the beginning of the proof.
Consider the event
$\tilde O=\{\Theta_n<n\}\setminus O$ and choose $\epsilon>3 n^{-1/3}$.
Observe that $\theta^{(n)}_n-\theta^{(n)}_{\Theta_n}=(n-\Theta_n)h\leq T$. Moreover on the event
$\tilde O$, $\min(C-X_{\theta^{(n)}_{\Theta_n}},X_{\theta^{(n)}_{\Theta_n}}-B)\leq 3n^{-1/3}$ (for sufficiently large $n$).
Thus,
from (\ref{3.101})--(\ref{3.102})
\begin{equation}\label{3.102+}
\mathbb Q\left(\tilde O\bigcap\left(\sup_{\theta^{(n)}_{\Theta_n}<t\leq\theta^{(n)}_n}|X_{\theta^{(n)}_{\Theta_n}}-X_t|\geq\epsilon\right)
\bigg|\mathcal{F}_{\theta^{(n)}_{\Theta_n}}\right) \leq
\frac{3n^{-1/3}}{3 n^{-1/3}+\epsilon}  \ \ \mbox{a.s.}
\end{equation}
We conclude that
\begin{eqnarray}\label{3.103}
& \quad \mathbb E_{\mathbb Q} \left(\mathbb{I}_{\tilde 0}\left(1\wedge\sup_{\theta^{(n)}_{\Theta_n}<t\leq\theta^{(n)}_n}
|X_{\theta^{(n)}_{\Theta_n}}-X_t|\right)\bigg||\mathcal{F}_{\theta^{(n)}_{\Theta_n}}\right) \leq\\
&3 n^{-1/3}+\int_{3 n^{-1/3}}^1 \frac{3n^{-1/3}}{3 n^{-1/3}+\epsilon}d\epsilon=O( n^{-1/3}\ln n)  \ \ \mbox{a.s.},\nonumber
\end{eqnarray}
where $\mathbb E_{\mathbb Q}$ denotes the expectation with respect to $\mathbb Q$.

Next, denote $\mathcal Z_n=\frac{Z_{\theta^{(n)}_n}}{Z_{\theta^{(n)}_{\Theta_n}}}$.
Observe that for a random variable $\mathcal X$,
\[\mathbb E(\mathcal X|\mathcal{F}_{\theta^{(n)}_{\Theta_n}})=\mathbb E_{\mathbb Q}(\mathcal Z_m\mathcal X|\mathcal{F}_{\theta^{(n)}_{\Theta_n}}).\]
From the relations
$\theta^{(n)}_n-\theta^{(n)}_{\Theta_n}=(n-\Theta_n)h\leq T$ it follows that for any $m\in\mathbb R$,
$\mathbb E_Q[\mathcal Z^m_n]$ is uniformly bounded (in $n$).
This together with the (conditional)
Holder inequality,
(\ref{moments})--(\ref{newformula}), and
(\ref{3.102+})--(\ref{3.103}) gives
\begin{eqnarray*}
\mathbb E\left(\sup_{\Theta_n<k\leq n}|X_{\theta^{(n)}_k}-X^{(n)}_k|\right) &\leq& \mathbb E[2\Gamma_n\mathbb{I}_O]
+\mathbb E\left(\mathbb E_{\mathbb Q}\left( 2\mathcal Z_n\Gamma_n \mathbb{I}_{\tilde
O}\mathbb{I}_{\sup_{\theta^{(n)}_{\Theta_n}<t\leq\theta^{(n)}_n}|X_{\theta^{(n)}_{\Theta_n}}-X_t|>1}\bigg|\mathcal{F}_{\theta^{(n)}_{\Theta_n}}\right)\right)
\\
&+&\mathbb E\left(\mathbb E_{\mathbb Q}\left(\mathcal Z_n\mathbb{I}_{\tilde O}\left(1\wedge\sup_{\theta^{(n)}_{\Theta_n}<t\leq\theta^{(n)}_n}
|X_{\theta^{(n)}_{\Theta_n}}-X_t|\right)\bigg|\mathcal{F}_{\theta^{(n)}_{\Theta_n}}\right)\right)\\
&+&\mathbb E\left(\mathbb{I}_{\tilde O}|X^{(n)}_{\Theta_n}-X_{\theta^{(n)}_{\Theta_n}}|\right)
\leq O(\sqrt h)+\mathbb E\left(O(1)\left(\frac{3n^{-1/3}}{3 n^{-1/3}+1}\right)^{4/5}\right)\\
&+&\mathbb E\left(O(1)\left(O(n^{-1/3}\ln n)\right)^{4/5}\right)+O(n^{-1/3})=O(h^{1/4})
\end{eqnarray*}
and the result follows.
\end{proof}

\section{Proof of Theorem \ref{thm2.2}}\label{sec:4}\setcounter{equation}{0}
The proof of Theorem \ref{thm2.2} is less technical than the proof of
Theorem \ref{thm2.1}.\\
We start with the following Lemma.
\begin{lem}\label{lem4.1}
Recall the stopping time $\rho^{X_0}_A$ from Section \ref{sec:2.2}. For $X_0\in (B,C)$ and $0<\epsilon<\min(X_0-B,C-X_0)$ we have
$$\mathbb E\left((\rho^{X_0}_{\epsilon})^m\right)=O(\epsilon^{2m}) \ \ \forall m>0.$$
\end{lem}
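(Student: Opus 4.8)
The plan is to combine a uniform $O(\epsilon^{2})$ bound on the mean exit time with a geometric tail estimate coming from the strong Markov property, and then obtain the moment bound by integrating the tail. First note that, since $\epsilon<\min(X_0-B,C-X_0)$, the interval $[X_0-\epsilon,X_0+\epsilon]$ is a compact subset of $(B,C)$, so up to time $\rho^{X_0}_\epsilon$ the absorbed process $X$ agrees with the diffusion $Y$ and never meets the barriers; thus $\rho^{X_0}_\epsilon$ is exactly the first exit time of the unabsorbed diffusion from $(X_0-\epsilon,X_0+\epsilon)$, and we may use the exit-time theory recalled in (\ref{KS}).

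\emph{Step 1 (uniform first moment).} Started from an arbitrary $x'\in[X_0-\epsilon,X_0+\epsilon]$, the expected exit time from $(X_0-\epsilon,X_0+\epsilon)$ equals $M_{X_0-\epsilon,X_0+\epsilon}(x')$. Since $\mu/\sigma^{2}$ is bounded, $p'$ stays within a factor $e^{O(\epsilon)}$ of $p'(X_0)=1$ on this interval, so $p(X_0+\epsilon)-p(X_0-\epsilon)=O(\epsilon)$; plugging $G_{a,b}(y,z)\le p(b)-p(a)$ and $1/\sigma^{2}\le\|1/\sigma\|_\infty^{2}$ into the integral that defines $M$ gives
\[
\sup_{x'\in[X_0-\epsilon,\,X_0+\epsilon]}\mathbb E_{x'}\!\left(\rho^{X_0}_\epsilon\right)\le C_0\,\epsilon^{2},
\]
with $C_0$ depending only on $\|\mu\|_\infty$ and $\|1/\sigma\|_\infty$, uniformly in $X_0$ and in $\epsilon$ over any fixed bounded range (which suffices, since in the applications $\epsilon$ is of order $\sqrt h\to0$). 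A short alternative avoiding (\ref{KS}): apply It\^{o}'s isometry to $Y_{t\wedge\rho^{X_0}_\epsilon}-x'$, use $\langle\int\sigma(Y)\,dW\rangle_t\ge\|1/\sigma\|_\infty^{-2}t$ and $|Y_{t\wedge\rho^{X_0}_\epsilon}-x'|\le 2\epsilon$ to get $\mathbb E(t_1\wedge\rho^{X_0}_\epsilon)=O(\epsilon^{2})$ for a fixed small $t_1$, then upgrade this to $\mathbb E_{x'}(\rho^{X_0}_\epsilon)=O(\epsilon^{2})$ by a one-line fixed-point argument based on the strong Markov property.

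\emph{Step 2 (tail and conclusion).} Put $T_0:=2C_0\epsilon^{2}$. By Markov's inequality $\mathbb P_{x'}(\rho^{X_0}_\epsilon>T_0)\le C_0\epsilon^{2}/T_0=1/2$ for every $x'$ in the interval; since $X_{jT_0}\in[X_0-\epsilon,X_0+\epsilon]$ on $\{\rho^{X_0}_\epsilon>jT_0\}$, the strong Markov property and induction give $\mathbb P(\rho^{X_0}_\epsilon>kT_0)\le 2^{-k}$ for all $k\ge0$. Hence, for any $m>0$,
\[
\mathbb E\!\left((\rho^{X_0}_\epsilon)^m\right)=m\int_0^\infty t^{m-1}\,\mathbb P(\rho^{X_0}_\epsilon>t)\,dt\le m\sum_{k\ge0}\big((k+1)T_0\big)^{m-1}T_0\,2^{-k}=c_m\,T_0^{m}=c_m(2C_0)^m\,\epsilon^{2m},
\]
which is $O(\epsilon^{2m})$ with constant independent of $X_0$.

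The only genuine work is Step~1: the $O(\epsilon^{2})$ mean-exit-time bound must be uniform \emph{both} in the interior starting point $x'$ --- this is precisely the uniformity that powers the strong-Markov iteration of Step~2 --- \emph{and} in the center $X_0$, which is where boundedness of $\mu/\sigma^{2}$ and of $1/\sigma$ enters, to control $p'$ and $\sigma^{-2}$ on the short interval. Everything after that is routine.
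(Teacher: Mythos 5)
Your proof is correct, but it takes a genuinely different route from the paper's. The paper works on the natural scale: it sets $\mathcal M_t=p(Y_t)$ with $p$ from (\ref{KS}), notes that $\mathcal M$ is a local martingale with quadratic variation $\int_0^t|p'(Y_u)\sigma(Y_u)|^2du$ and that $|\mathcal M_t|\leq\max\left(|p(X_0+\epsilon)|,|p(X_0-\epsilon)|\right)=O(\epsilon)$ up to time $\rho^{X_0}_\epsilon$, and then applies the lower Burkholder--Davis--Gundy inequality to obtain $O(\epsilon^{2m})\geq C\left(\inf|p'\sigma|\right)^{2m}\mathbb E\left((\rho^{X_0}_{\epsilon})^m\right)$ in a single step; the boundedness of $\mu/\sigma^2$ and of $1/\sigma$ enters only through $p(y)=O(\epsilon)$ and $p'(y)=1-O(\epsilon)$ on $[X_0-\epsilon,X_0+\epsilon]$. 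You instead prove a mean-exit-time bound $O(\epsilon^2)$ uniform in the starting point, bootstrap it to a geometric tail via the Markov property, and integrate the tail. Both arguments are sound. Yours is more elementary (no BDG), yields the strictly stronger conclusion of an exponential tail at scale $\epsilon^2$, and makes explicit the uniformity in $X_0$ that the application in the proof of Theorem \ref{thm2.2} actually requires; the price is that you must invoke the Markov property and the uniform-in-starting-point version of the exit formula, neither of which the paper's one-shot martingale computation needs. Two minor points. First, your displayed tail integration is only valid as written for $m\geq1$: for $0<m<1$ the factor $t^{m-1}$ is decreasing, so the comparison with $\left((k+1)T_0\right)^{m-1}$ goes the wrong way; this is harmless since the case $m\leq1$ then follows from $m=1$ by Jensen (the paper makes the analogous reduction elsewhere). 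Second, the parenthetical ``short alternative'' in Step 1 does not give $O(\epsilon^2)$ as stated, because with a \emph{fixed} $t_1$ the drift contributes a term of order $t_1^2$ rather than $\epsilon^2$; it can be repaired by moving the drift contribution $O(t_1)\,\mathbb E(t_1\wedge\rho^{X_0}_{\epsilon})$ to the left-hand side for $t_1$ small but fixed and then running your fixed-point step, but as written that aside is the one sketchy spot. The main line of your argument does not depend on it.
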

\begin{proof}
Choose $m>1$, $X_0\in (B,C)$ and $0<\epsilon<\min(X_0-B,C-X_0)$.
Define the stochastic process
$\mathcal M_t=p(Y_t)$, $t\geq 0$ where recall $p$ is given in (\ref{KS}).
It is easy to see that $p$ is strictly increasing function and $\{\mathcal M_t\}_{t\geq 0}$ is a local--martingale
which satisfies $\mathcal M_t=\int_{0}^t p'(Y_u)\sigma(Y_u) dW_u.$
Hence
$\rho^{X_0}_{\epsilon}=\inf\{t:\mathcal M_t=p(X_0\pm \epsilon)\}$.
From the Burkholder--Davis--Gundy inequality it follows that there exists a constant $C>0$ such that
\begin{eqnarray}\label{4.newform}
\max\left\{|p(X_0+\epsilon)|^{2m},|p(X_0-\epsilon)|^{2m}\right\} &\geq&
\mathbb E\left(\max_{0\leq t\leq\rho^{X_0}_{\epsilon}}|\mathcal M_{t}|^{2m}\right)\\
&\geq& C\mathbb E\left(\left(\int_{0}^{\rho^{X_0}_{\epsilon}} |p'(Y_u)\sigma(Y_u)|^2 du\right)^m\right)\nonumber\\
&\geq& C\left(\inf_{y\in[X_0-\epsilon,X_0+\epsilon]}|p'(y)\sigma(y)|\right)^{2m}\mathbb E\left((\rho^{X_0}_{\epsilon})^{m}
\right).\nonumber
\end{eqnarray}
Since $\frac{\mu}{\sigma^2}$ is uniformly bounded, then
for $y\in [X_0-\epsilon,X_0+\epsilon]$ we have
$p(y)=O(\epsilon)$ and $p'(y)=1-O(\epsilon)$. This together with (\ref{4.newform}) competes the proof.
 \end{proof}

Now we are ready to prove Theorem \ref{thm2.2}.

\textbf{Proof of Theorem 2.2.}
The proof follows the steps of the proof of Theorem \ref{thm2.1},
however the current case is much less technical. The reason is that we do not take a perturbation of the process $X_t$, $t\geq 0$
and so we have an exact scheme at the random times
 $\{\vartheta^{(n)}_k\}_{k=0}^n$. Thus,
 we can write similar inequalities to
 (\ref{3.1})--(\ref{3.2}), only now the analogous term to
 $\max_{0\leq i\leq n}|X^{(n)}_i-X_{\theta^{(n)}_i}|$ is vanishing.
Hence, we can skip Lemmas \ref{important} and \ref{lem3.2}.
Namely, in order to complete the proof
of Theorem \ref{thm2.2} it remains to show the following analog of Lemma \ref{lem3.1}:
\begin{equation}\label{4.2}
\mathbb E\left(\max_{1\leq k\leq n}|\vartheta^{(n)}_k-k h|^m\right)=O(h^{m/2}), \ \ \forall m>0.
\end{equation}
Without loff of generality, we assume that $m>1$. Set,
$$H_i=\vartheta^{(n)}_{i}-\vartheta^{(n)}_{i-1}-
\mathbb E\left(\vartheta^{(n)}_{i}-\vartheta^{(n)}_{i-1}|\mathcal F_{\vartheta^{(n)}_{i-1}}\right), \ \ i=1,...,n.$$
Clearly the stochastic process $\{\sum_{i=1}^k H_i\}_{k=1}^n$ is a martingale, and from
(\ref{idea}) we get that for all $k$,
$\vartheta^{(n)}_k-kh=\sum_{i=1}^k H_i+ O(\sqrt h)$.
Moreover, on the event $X_{\vartheta^{(n)}_{i-1}}\in (B,C)$
$\vartheta^{(n)}_{i}-\vartheta^{(n)}_{i-1}\leq\rho^{X_{\vartheta^{(n)}_{i-1}}}_{\bar{\sigma}\sqrt h}$
and so from Lemma \ref{lem4.1},
$\mathbb E[H^m_i]=O(h^m)$.
This together with
the Burkholder--Davis--Gundy inequality and (\ref{convex}) yields
\begin{eqnarray*}\label{4.3}
\mathbb E\left(\max_{1\leq k\leq n}|\vartheta^{(n)}_k-k h|^m\right)&\leq&  O(h^{m/2})+
O(1)\mathbb E\left(\max_{1\leq k\leq n}|\sum_{i=1}^k H_i|^m\right)\nonumber\\
&\leq& O(h^{m/2})+O(1)\mathbb E\left(\left(\sum_{i=1}^n H^2_i\right)^{m/2}\right) \nonumber\\
&\leq& O(h^{m/2})+O(1)n^{m/2-1}\sum_{i=1}^n\mathbb E[|H_i|^m]  =O(h^{m/2})\nonumber
\end{eqnarray*}
and the result follows. \hfill $\square$

\subsection{Some remarks on the proofs}\label{sec:remarks}
\begin{rem}
Theorem \ref{thm2.2} can be easily extended to
American barrier options which we study numerically in Sections 5.4--5.5.

Namely, let $-\infty\leq B<x<C\leq\infty$
and let $Y$ be the unique (in law) solution of (\ref{2.1}).
Let $\tau_{B,C}=T\wedge\inf\{t:Y_t\notin (B,C)\}$. Consider the optimal stopping value
$$V=\sup_{\tau\in\mathcal T_{[0,T]}}\mathbb E[\mathbb{I}_{\tau\leq\tau_{B,C}} f(\tau,X_{\tau})]$$
which corresponds to the price of American barrier options.
Let us notice that if we change in the above formula, the indicator $\mathbb{I}_{\tau\leq\tau_{B,C}}$
to $\mathbb{I}_{\tau<\tau_{B,C}}$ the value remains the same.
As before, we choose $n$ sufficiently large and
a constant $\bar\sigma=\bar\sigma(n)>\sup_{y\in (B,C)}|\sigma(y)|+\sqrt h\sup_{y\in(B,C)}|\mu(y)|$
which satisfies
$\frac{C-x}{\bar\sigma\sqrt h},\frac{x-B}{\bar\sigma\sqrt h}\in\mathbb N\cup\{\infty\}$.
Define
$$\tilde V_n= \max_{\eta\in\mathcal S_n} \mathbb E[\mathbb{I}_{\eta\leq\eta^{(n)}_{B,C}}f(\eta h,X_{\vartheta^{(n)}_{\eta}})]$$
where $\eta^{(n)}_{B,C}=n\wedge\min\{k:Y_{\vartheta^{(n)}_k}\in\{B,C\}\}$.
In this case, we observe that
on the interval $[0,\vartheta^{(n)}_n]$ the process $Y$ can reach the barriers $B,C$ only at the moments
 $\vartheta^{(n)}_i$ $i=0,1,...,n$. Hence, by
similar arguments, Theorem \ref{thm2.2},
can be extended to the current case as well.
\end{rem}
\begin{rem}\label{Skorokhod}
Let us notice that in both of the proofs we get that the random maturity dates $\theta^{(n)}_n, \vartheta^{(n)}_n$
are close
to the real maturity date $T$, and the error estimates are
of order $O(n^{-1/2})$. Hence, even for the simplest case where the diffusion process
$Y$ is the standard Brownian motion we get that
the random variables $|Y_T-Y_{\theta^{(n)}_n}|, |Y_T-Y_{\vartheta^{(n)}_n}|$
are of order $O(n^{-1/4}).$ This is the reason that we can not expect
better error estimates if the proof is done via Skorohod embedding. In \cite{LR}
the authors got error estimates of order $O(n^{-1/2})$ for optimal stopping approximations via Skorokhod embedding, however
they assumed very strong assumptions on the payoff, which excludes even
call and put type of payoffs.

Our numerical results in the next section suggest that the order of convergence we predicted is better than we prove here. In fact, the constant multiplying the power of $n$ is
small, which we think is due to the fact that we have a very efficient way of calculating conditional expectations. We will leave the investigation of this phenomena for future research.

\end{rem}

\section{Numerical examples}\label{sec:5}
In the first subsection we consider two toy models, which are variations of geometric Brownian motion, the first one with capped coefficients and the other one with absorbing
boundaries. In Subsections \ref{sec:CEV}, we consider the CEV model and in \ref{sec:CIR}, we consider the CIR model. In Subsection~\ref{Sec:cbo} we consider the European capped
barrier of \cite{DV} and its American counterpart. We close the paper by considering another modification of geometric Brownian motion with discontinuous coefficients, see
Subsection~\ref{sec:jump}. In these sections we report the values of the option prices, optimal exercise boundaries, the time/accuracy performance of the schemes and numerical
convergence rates.  All computations are implemented in Matlab R2014a on Intel(R) Core(TM) i5-5200U CPU @2.20 GHz, 8.00 GB installed RAM PC.
\subsection{Approximation of one dimensional diffusion with bounded $\mu$ and $\sigma$ by using both two schemes}
We consider the following model:
\begin{equation}\label{5.18}
dS_t=[(A_1 \wedge S_t) \vee B_1]dt+[(A_2 \wedge S_t) \vee B_2]dW_t, \quad  S_0=x,
\end{equation}
where $A_1,B_1,A_2,B_2$ are constants, and the American put option with strike price $\$K$ and expiry date $T$:
\begin{equation}\label{5.19}
v(x)=\sup_{\tau\in\mathcal{T}_{[0,T]}}\mathbb E\left[e^{-r\tau}(K-S_{\tau})^+\right].
\end{equation}
Figure~\ref{fig:exercise-bndry-toy1} shows the value function $v$ and the optimal exercise boundary curve which we obtained using both schemes. Table~\ref{table:tm1} reports
the schemes take and Figure~\ref{fig:spdcntm1} shows the rate of convergence of both models.
\begin{figure}[H]
\begin{center}
\includegraphics[scale=.3]{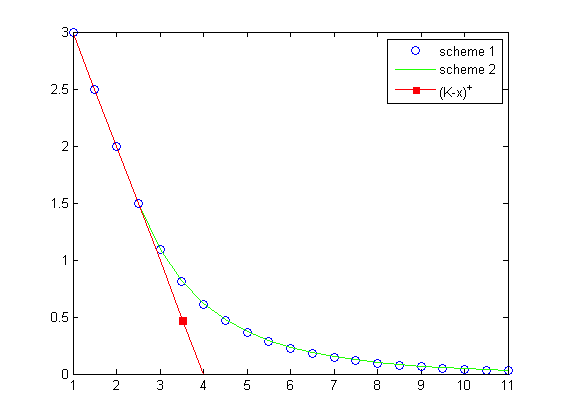}\includegraphics[scale=.3]{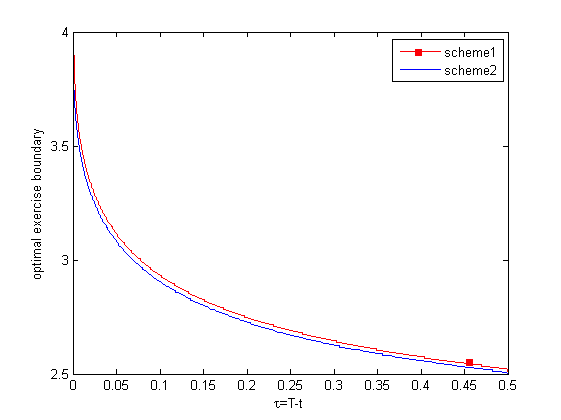}
\end{center}
\caption{The left figure shows that value function of American put (\ref{5.19}) under model (\ref{5.18}) with parameters:   $n=8000,r=0.1,K=4, A_1=10,A_2=10,B_1=2,B_2=2,T=0.5$.
The right figure is the optimal exercise boundary curve. Here $\tau$ is the time to maturity.} \label{fig:exercise-bndry-toy1}
\end{figure}
\vspace{-2em}
\begin{table}[H]
\begin{center}
\caption{Put option prices(\ref{5.19}) under model (\ref{5.18}):}
\begin{tabular}{ c c c c c c } \label{table:tm1}
&scheme 1&scheme 2&&scheme 1&scheme 2\\
\hline
n=1000&0.5954&0.6213&n=2000&0.6042&0.6213\\
\hline
error(\%)&3.64&0.23&error(\%)&2.21&0.08\\
\hline
CPU&0.540s&0.374s&CPU&2.097s&1.467s\\
\hline
&scheme 1&scheme 2&&scheme 1&scheme 2\\
\hline
n=3000&0.6079&0.6214&n=4000&0.6100&0.6216\\
\hline
error(\%)&1.61&0.06&error(\%)&1.27&0.03\\
\hline
CPU&4.749s&3.249s&CPU&8.726s&5.747s\\
\hline
&scheme 1&scheme 2&&scheme 1&scheme 2\\
\hline
n=5000&0.6114&0.6216&n=6000&0.6124&0.6216\\
\hline
error(\%)&1.04&0.03&error(\%)&0.88&0.02\\
\hline
CPU&14.301s&9.145s&CPU&24.805s&14.080s\\
\hline
\end{tabular}
\end{center}
Parameters used in computation are: $r=0.1,K=4, A_1=10,A_2=10,B_1=2,B_2=2,T=0.5,x=4$. Error is computed by taking absolute difference between $v_n(x)$ and $v_{30000}(x)$ and
then dividing by $v_{30000}(x)$.
\end{table}
\begin{figure}[H]
\begin{center}
\includegraphics[scale=.3]{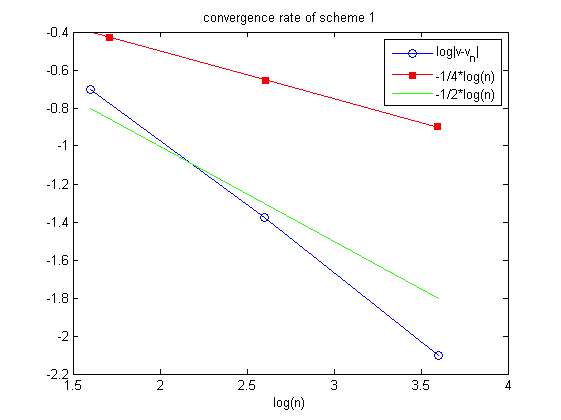}\includegraphics[scale=.3]{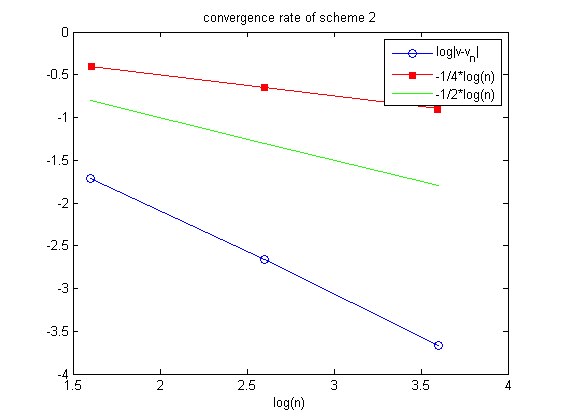}
\end{center}
\caption{Convergence rate figure of both two schemes in the implement of American put (\ref{5.19}) under (\ref{5.18}). We draw $\log{|v-v_n|}\ vs\ \log{n}$ for both schemes to
show the convergence speed.
Here we pick $v(x)=v_{30000}(x)$ for $x=4$ and  $n \in \{40,400, 4000\}$. The slope of the left line given by linear regression is $-0.69981$, and the right is $-0.97422$.
Parameters used in computation are: $r=0.1,K=4, A_1=10,A_2=10,B_1=2,B_2=2,T=0.5,x=4$.}\label{fig:spdcntm1}
\end{figure}

For comparison we will also consider a geometric Brownian motion with absorbing barriers. Let
\begin{equation}
dY_t=Y_tdt+Y_tdW_t, \quad Y_0=x,
\end{equation}
and $B, C\in [-\infty,\infty)$ with $B<x<C$ and denote
\begin{equation}\label{5.21}
X_t=\mathbb{I}_{t<\inf\{s:Y_s\notin (B,C)\}}Y_t+\mathbb{I}_{t\geq\inf\{s:Y_s\notin (B,C)\}}Y_{\inf\{s:Y_s\notin (B,C)\}},  \quad X_0=x.
\end{equation}
As in the previous example, we show below the value function, times the schemes take and the numerical convergence rate.

\begin{figure}[H]
\begin{center}
\includegraphics[scale=.4]{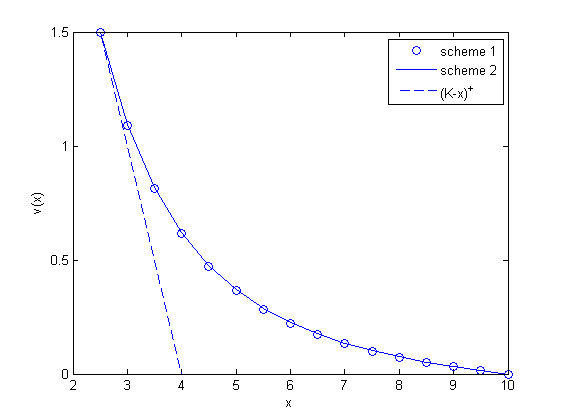}\includegraphics[scale=.4]{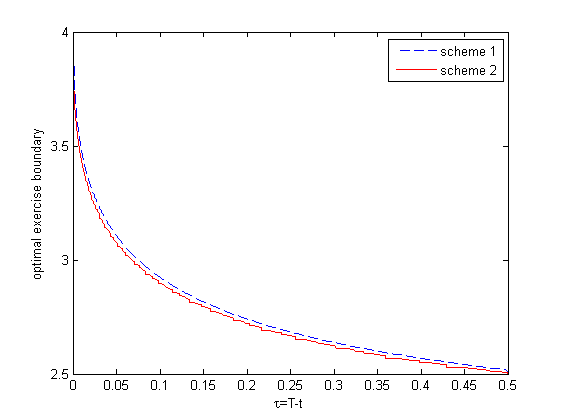}
\end{center}
\caption{The left figure shows the value function of American put (\ref{5.19}) under model (\ref{5.21}) with parameters: $n=30000,r=0.1,K=4, C=10,B=2,T=0.5$. The right figure
is the optimal exercise boundary curve.}\label{fig:toymodel2}
\end{figure}
\begin{figure}[H]
\begin{center}
\includegraphics[scale=.4]{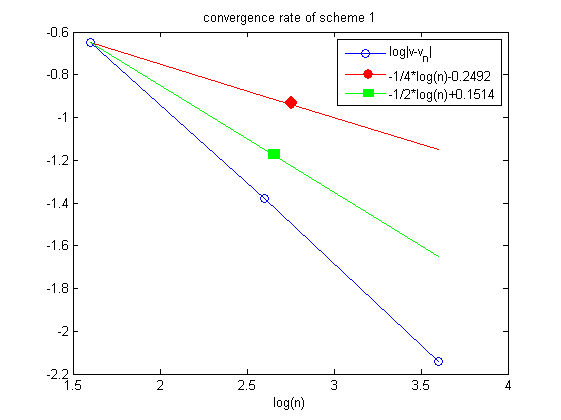}\includegraphics[scale=.4]{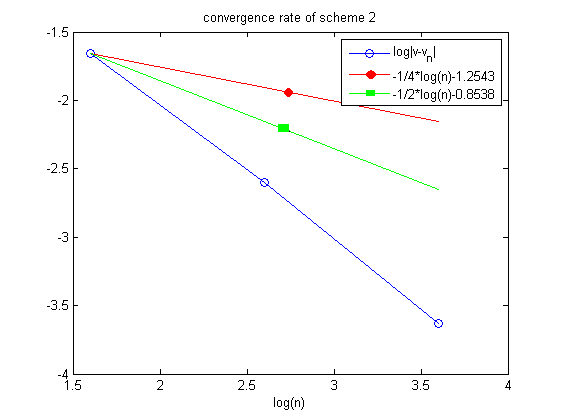}
\end{center}
\caption{Convergence rate figure of both two schemes in the implement of American put (\ref{5.19}) under (\ref{5.21}). We draw $\log{|v-v_n|}\ vs\ \log{n}$ for both schemes to
show the convergence speed. Here we pick, $v(x)=v_{30000}(x)$ for $x=4$ and  $n \in \{40,400, 4000\}$. The slope of the left line given by linear regression is $-0.74513$, the
right one is $-0.98927$.}
\end{figure}

\begin{table}[H]
\caption{put option prices(\ref{5.19}) under (\ref{5.21})}
\begin{center}
\begin{tabular}{ c c c c c c}
&scheme 1&scheme 2&&scheme 1&scheme 2\\
\hline
n=1000&0.5943&0.6175&n=2000&0.6029&0.6185\\
\hline
error(\%)&3.44&0.24&error&2.05&0.09\\
\hline
CPU&0.240s&0.029s&CPU&0.667s&0.091s\\
\hline
&scheme 1&scheme 2&& scheme 1&scheme 2\\
\hline
n=3000&0.6063&0.6184&n=4000& 0.6082&0.6188\\
\hline
error(\%)&1.49&0.09&error&1.17&0.03\\
\hline
CPU&1.220s&0.157s&CPU&1.927s&0.254s\\
\hline
&scheme 1&scheme 2&&scheme 1&scheme 2\\
\hline
n=5000&0.6095&0.6188&n=6000&0.6104&0.6189\\
\hline
error(\%)&0.96&0.04&error&0.81&0.02\\
\hline
CPU&2.720s&0.359s&CPU&3.723s&0.457s\\
\hline
\end{tabular}
\end{center}
\textbf{Note:} Parameters used in computation: $r=0.1,K=4, C=10,B=2,T=0.5, x=4$. Error is computed by taking absolute difference between $v_n(x)$ and $v_{30000}(x)$ and then
dividing by $v_{30000}(x)$.\\
\end{table}

\subsection{Pricing American put option under CEV}\label{sec:CEV}
Set $\beta \leq 0$ and consider the CEV model:
\begin{equation}\label{5.1}
dS_t=rS_t+\delta S_t^{\beta+1}dW_t, \quad S_0=x,
\end{equation}
and let $\delta=\sigma_0x^{-\beta}$ (to be consistent with the notation of \cite{DV}). Consider the American put option pricing problem:
\begin{equation}\label{5.2}
v_A(x)=\sup_{\tau\in\mathcal{T}_{[0,T]}} \mathbb{E}\left[e^{-r \tau}(K-S_{\tau})^+\right].
\end{equation}

Clearly, the CEV model given by (\ref{5.1}) does not satisfies our assumptions.
However, this limitation can be solved by truncating the model.
The next result shows that if we choose absorbing barriers $B,C>0$ such that $B$ is small enough and
$C$ is large enough, then the optimal stopping problem does not change much.
Hence, we can apply our numerical scheme for the absorbed diffusion
and still get error estimates of order $O(n^{-1/4})$.
\begin{lem}
Choose $0<B<C$.
Consider the CEV model given by (\ref{5.1}) and let $X$ be the absorbed process
$$X_t=\mathbb{I}_{t<\inf\{s:S_s\notin (B,C)\}}S_t+\mathbb{I}_{t\geq\inf\{s:S_s\notin (B,C)\}}S_{\inf\{s:S_s\notin (B,C)\}}, \ \ t\geq 0.$$
Then
\begin{equation}
\sup_{\tau\in\mathcal T_{[0,T]}}\mathbb E[e^{-r\tau}(K-S_{\tau})^{+}]-\sup_{\tau\in\mathcal T_{[0,T]}}\mathbb E[e^{-r\tau}(K-X_{\tau})^{+}]=  \mathbb{I}_{\tau^* \leq \tau_{B}}
B+O(C^{-k}),
\ \ \forall k\geq 1,
\end{equation}
where $\tau^*$ is the optimal time for first the expression on the left and $\tau_B$ is the first time $S_t$ is below the level $B$.\footnote{It might seem to be a moot point
to have $\mathbb{I}_{\tau^* \leq \tau_{B}}$ since this involves determining the optimal stopping boundary. But one can actually easily determine $B$'s satisfying this condition
through through setting $B \leq b$, the perpetual version of the problem whose stopping boundary, which can be determined by finding the unique root of
\begin{equation}\label{eq:perpbnd}
F(x)=\phi'(x)\frac{K-x}{\phi(x)}+1,
\end{equation}
 in which the function $\phi$ is given by \cite[Equation (38)]{DV} (with $\lambda=r$), see e.g. \cite{doi:10.1080/14697680903170817}.
The perpetual Black-Scholes boundary,
\begin{equation}\label{eq:perbndbs}
b=2rK/(2r+\delta^2)
\end{equation}
 is in fact is a lower bound to $b$ for $\beta<0$, which can be proved using the comparison arguments in \cite{MR3180040}.}
\end{lem}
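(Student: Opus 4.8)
The plan is to compare the two values by using the optimal stopping rule of each problem as a (suboptimal) control in the other, and then to isolate the difference on the only two events where the absorbed path and the genuine CEV path can disagree: $\{S\text{ reaches }C\text{ before }T\}$ and $\{S\text{ reaches }B\text{ before }T\}$. Write $\tau_{B,C}=\inf\{t:S_t\notin(B,C)\}$, so that $X_t=S_t$ on $[0,\tau_{B,C}]$ and $X_t=S_{\tau_{B,C}}$ afterwards, and abbreviate the two suprema in the statement by $v(x)$ (this is $v_A(x)$ of (\ref{5.2})) and $\tilde v(x)$.

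First I would show $\tilde v(x)\le v(x)$, which yields the lower half of the $O(C^{-k})$ estimate at once. For any $\eta\in\mathcal T_{[0,T]}$, replacing $\eta$ by $\eta\wedge\tau_{B,C}$ does not decrease $\mathbb E[e^{-r\eta}(K-X_\eta)^+]$, because after $\tau_{B,C}$ the process $X$ is frozen and $t\mapsto e^{-rt}(K-X_{\tau_{B,C}})^+$ is non-increasing; since $X$ and $S$ coincide on $[0,\tau_{B,C}]$, this truncated rule is admissible for the $S$-problem with the same reward, hence $\tilde v(x)\le v(x)$ and $v(x)-\tilde v(x)\ge 0$.

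For the upper bound I would run $\tau^*$ in the absorbed problem, so that
\[
0\le v(x)-\tilde v(x)\le\mathbb E\!\left[e^{-r\tau^*}\bigl((K-S_{\tau^*})^+-(K-X_{\tau^*})^+\bigr)\right].
\]
The bracket vanishes on $\{\tau^*\le\tau_{B,C}\}$ (there $X_{\tau^*}=S_{\tau^*}$), so only $\{\tau^*>\tau_{B,C}\}$ matters, where $X_{\tau^*}=S_{\tau_{B,C}}\in\{B,C\}$. On the part $\{S_{\tau_{B,C}}=C\}$ one has $(K-X_{\tau^*})^+=0$ (as $C>K$) and $(K-S_{\tau^*})^+\le K$, and this part is contained in $\{\max_{0\le t\le T}S_t\ge C\}$; Markov's inequality together with the finiteness of all polynomial moments of $\max_{[0,T]}S_t$ for CEV with $\beta\le0$ then bound its contribution by $O(C^{-k})$ for every $k\ge1$. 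On the part $\{S_{\tau_{B,C}}=B\}$ one has $\tau^*>\tau_B$, and the elementary estimate $(K-S_{\tau^*})^+-(K-B)^+\le B$ (using $S_{\tau^*}\ge0$ and $B\le K$) bounds its contribution by $B\,\mathbb P(\tau^*>\tau_B)$; this is the term recorded by the indicator in the statement, and under the choice of $B$ in the footnote it vanishes identically, since the finite-horizon American-put exercise boundary $t\mapsto b(t)$ dominates the perpetual boundary $b\ge B$, so $S$ must cross $b(\cdot)$ before it can reach $B$, forcing $\tau^*\le\tau_B$ a.s. Collecting the three bounds gives the assertion.

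The step I expect to be the real obstacle is the tail bound $\mathbb P(\max_{0\le t\le T}S_t\ge C)=O(C^{-k})$ for the CEV diffusion: its coefficients $rz$ and $\delta z^{\beta+1}$ are neither bounded nor globally Lipschitz, so one cannot invoke the framework of Assumptions~\ref{asm2.1}--\ref{asm2.2} and must argue directly — for instance, noting that for $\beta\le0$ the diffusion coefficient has at most linear growth in $S$ on $\{S\ge0\}$, applying It\^{o}'s formula to $S^{2p}$ together with the Burkholder--Davis--Gundy and Gronwall inequalities to obtain $\mathbb E[\max_{[0,T]}S_t^{2p}]<\infty$, and then using Markov. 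Everything else is bookkeeping on the two exceptional events, plus standard references for the monotonicity of the American-put exercise boundary in the maturity and for the location of the perpetual boundary (e.g.\ (\ref{eq:perpbnd})--(\ref{eq:perbndbs})).
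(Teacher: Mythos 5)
Your decomposition and bookkeeping are correct and are in fact more explicit than what the paper records: the paper's own proof consists only of (i) the moment bound $\mathbb E[\max_{0\le t\le T}S_t^k]<\infty$ for all $k$, obtained for $\beta<0$ by writing $S^{-\beta}$ as a CIR-type process, representing that as a squared Bessel process, and invoking a cited estimate for the moments of the running maximum of Bessel processes, followed by Markov's inequality; and (ii) the one-line observation that if $S$ hits $B<K$ the put payoff there is $K-B$, so the error from the lower barrier is at most $B$ and is absent when $B$ lies in the stopping region. Your two-sided comparison ($\tilde v\le v$ by truncating at the exit time $\tau_{B,C}$, the upper bound by running $\tau^*$ in the absorbed problem, and the split over $\{S_{\tau_{B,C}}=C\}$ and $\{S_{\tau_{B,C}}=B\}$) is exactly the argument the paper leaves implicit, and your handling of the lower barrier matches the footnote's intent. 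The genuine difference is in how you prove the key moment bound: you propose a direct It\^o/Burkholder--Davis--Gundy/Gronwall argument, whereas the paper leans on the BESQ representation and an external reference. Your route is more self-contained, but one sentence in it is wrong as stated: for $\beta<-1$ (a case used in the paper's numerics, e.g.\ $\beta=-2,-3$) the diffusion coefficient $\delta S^{\beta+1}$ blows up as $S\downarrow 0$, so it does not have linear growth on all of $\{S\ge 0\}$. The argument nonetheless goes through, because applying It\^o's formula to $S^{2p}$ produces the second-order drift term $p(2p-1)\delta^2 S^{2p+2\beta}$, which for $p\ge -\beta$ is bounded by $1+S^{2p}$, and large $p$ is all that is needed to get the $O(C^{-k})$ tail for large $k$; you should justify the Gronwall step that way rather than by appealing to global linear growth of the coefficients.
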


\begin{proof}
Let us briefly argue that
$\mathbb E\left(\max_{0\leq t\leq T}S^k_t\right)<\infty$ for all $k$. Clearly, if $\beta=0$ then $S$ is a geometric Brownian motion and so the
statement is clear. For $\beta<0$ we observe that the process $Y=S^{-\beta}$ belongs
to CIR family of diffusions (see equation (1) in \cite{GJY}). Such process can be represented in terms
of BESQ (squared Bessel processes); see equation (4) in \cite{GJY}. Finally, the moments of the running maximum of the (absolute value) Bessel process
can be estimated by Theorem 4.1 in \cite{GP}.
Hence from the Markov inequality it follows that
$\mathbb P(\max_{0\leq t\leq T}S_t\geq M)=O(M^{-k}).$
 Next, consider the stopping time $\tau_{B}=\inf\{t: S_t=B\}\wedge T$.
If $\tau_{B}<T$ then the payoff of the put option is $K-B$,
and since $S$ is non--negative its $B$ optimal to stop at this time.
It is clear also that one would make no error if $B$ is always in the stopping region.
\end{proof}
\begin{table}[H]
\begin{center}
\caption{Pricing American puts (\ref{5.2}) under the CEV (\ref{5.1}) model with different $\beta$'s by using the first scheme}
\begin{tabular}{ c c c c c}
 $K$ & $\beta=-1$  &$\beta=-\frac{1}{3}$ \\
90&1.4373(1.5122)&1.3040(1.3844)\\
100&4.5359(4.6390)&4.5244(4.6491)\\
110& 10.6502(10.7515)&10.7716(10.8942)\\
\end{tabular}
\end{center}
Parameters used in computation are: $\sigma_0=0.2, x=100, T=0.5, r=0.05, n=15000,B=0.01,C=200$. The values in the parentheses are the results from last row of Tables $4.1, 4.2,
4.3$ from paper \cite{WZ}, which carries out the artificial boundary method discussed in the introduction. This is a finite difference method which relies on artificially introducing an exact boundary condition. 
\end{table}
\begin{table}[H]
\begin{center}
\caption{Prices of the American puts (\ref{5.2}) under (\ref{5.1}) with different $\beta$'s by using the second scheme}
\begin{tabular}{ c c c c }
 $K$ & $\beta=-1$  &$\beta=-\frac{1}{3}$\\
90&1.5123(1.5122)&1.3845(1.3844)\\
100&4.6392(4.6390)&4.6492(4.6489)\\
110&10.7517(10.7515)&10.8943(10.8942)\\
\end{tabular}
\end{center}
Parameters used in computation are: $\sigma_0=0.2, x=100, T=0.5, r=0.05, n=15000, B=0.01, C=200$. The values in the parentheses are the results from last row of Tables $4.1,
4.2, 4.3$ from paper \cite{WZ}, which carries out the artificial boundary method discussed in the introduction. This is a finite difference method which relies on artificially introducing an exact boundary condition. 
\end{table}
\begin{figure}[H]
\begin{center}
\includegraphics[scale=.4]{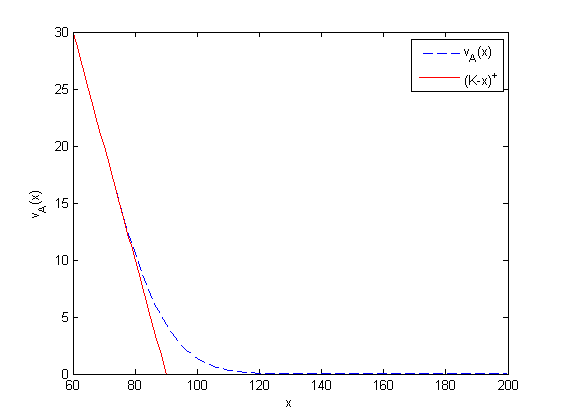}\includegraphics[scale=.4]{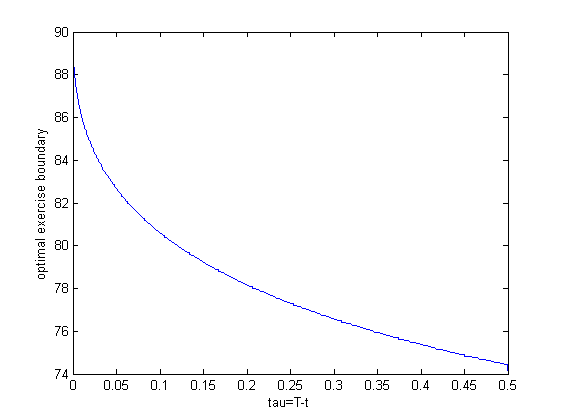}
\end{center}
\caption{ The left figure shows that value function of American put (\ref{5.2}) under CEV (\ref{5.1}) with parameters:
$n=2000,\beta=-\frac{1}{3},\delta=\sigma_0(100)^{-\beta},\sigma_0= 0.2, T = 0.5,r = 0.05,B=0.01,C = 200,K = 90.$ The right figure is the optimal exercise boundary curve which
$\tau=T-t$ denotes time to maturity.  With these parameters $F(60)=-0.3243, F(70)=0.1790$ (where $F$ is defined in \eqref{eq:perpbnd}), from which we conclude that the unique
root is in (60,70). So there in fact is no error in introducing a lower barrier.  In this case $b=9.3577$ (where $b$ is defined in \eqref{eq:perbndbs}).}
\end{figure}
Since we observed that second scheme exhibits faster convergence, we report the CPU time and errors for second scheme.
\begin{table}[H]
\begin{center}
\caption{Prices of the American puts under (\ref{5.1}) with different $beta$'s (second scheme)}
\begin{tabular}{ c c c c }
 & $\beta=-1$  &$\beta=-\frac{1}{3}$\\
K=90&1.5147&1.3862\\
CPU&0.335s&0.259s\\
error(\%)&0.16&0.12\\
\hline
K=100&4.6414&4.6383\\
CPU&0.314s&0.256s\\
error(\%)&0.05&0.23\\
\hline
K=110&10.7523&10.8925\\
CPU&0.338s&0.253s\\
error(\%)&0.005&0.02\\
\end{tabular}
\end{center}
Parameters used in computation are: $\sigma_0=0.2, x=100, T=0.5, r=0.05, B=0.01, C=200$. The Error is computed by taking absolute difference between $v_n(x)$ and $v_{15000}(x)$
and then dividing by $v_{15000}(x)$.
\end{table}
\
\begin{table}[H]\label{tab:comp}
\begin{center}
\caption{Pricing American puts (\ref{5.2}) under (\ref{5.1}) with different $\sigma_0$ (second scheme)}
\begin{tabular}{ c c c c c}
 & $\sigma_0=0.2$  &$\sigma_0=0.3$&$\sigma_0=0.4$ \\
K=35&1.8606(1.8595)&4.0424(4.0404)&6.4017(6.3973)\\
CPU&0.247s&0.171s&0.136s\\
error(\%)&0.04&0.03&0.06\\
\hline
K=40&3.3965(3.3965)&5.7920(5.7915)&8.2584(8.2574)\\
CPU&0.237s&0.172s&0.154s\\
error(\%)&0.01&0.002&0.004\\
\hline
K=45& 5.9205(5.9204)&8.1145(8.1129)&10.5206(10.5167)\\
CPU&0.203s&0.170s&0.146s\\
error(\%)&0.01&0.02&0.03\\
\end{tabular}
\end{center}
Parameters used in computation are: $T=3, r=0.05,x=40,\beta=-1,n=100, B=0.01, C=100$. Error is computed by taking absolute difference between $v_n(x)$ and $v_{30000}(x)$ and then dividing by $v_{30000}(x)$. The values in the parentheses are the results computed by FDM:$1024\times 1024$ reported in Table 1 of \cite{WZ2}.
\end{table}

In Table~\ref{tab:comp} we compare our numerical results with the ones reported in Table 1 of \cite{WZ2}, the numerical values here are closer to results computed by FDM:$1024\times 1024$, which is the traditional Crank-Nicolson finite difference scheme (the $1024 \times 1024$ refer to the size of the lattice). The CPU time of FDM:$1024\times 1024$ is 6.8684s. Table 1 of \cite{WZ2} also reports the performance of the Laplace-Carson transform and the artificial boundary method of \cite{WZ} (both of which we briefly described in the introduction). The CPU for Laplace-Carson method is 0.609s, ABC:$512\times 512$  is 0.9225s. If we consider FDM:$1024\times 1024$ as a Benchmark, our method is more than \textbf{30 times} faster with accuracy up to 4 decimal points. Comparing with ABC:$512\times 512$ method, our method is about \textbf{5 times} faster with the same accuracy.  Our method is about \textbf{3 times} faster than what the Laplace transform method produced when it had accuracy up to 3 decimal points. (Here accuracy represents the absolute difference between the computed numerical values and the FDM:$1024\times 1024$  divided by the FDM:$1024\times 1024$).
\subsection{Pricing American put option under CIR}\label{sec:CIR}
In this subsection, we use numerical scheme to evaluate American put options $v_A(x)$ (recall formula (\ref{5.2})) under CIR model. (This model is used for pricing VIX options,
see e.g. \cite{2016arXiv160600530K}).
Let the volatility follow
\begin{equation}\label {5.3b*}
dY_t=(\beta- \alpha Y_t)dt+\sigma \sqrt{Y_t}dW_t, \quad Y_0=y,
\end{equation}
Consider the absorbed process
$$X_t=\mathbb{I}_{t<\inf\{s:Y_s\notin (B,C)\}}Y_t+\mathbb{I}_{t\geq\inf\{s:Y_s\notin (B,C)\}}Y_{\inf\{s:Y_s\notin (B,C)\}}, \quad t\geq 0,$$
for $0<B<C$.
Same arguments as in Section~\ref{sec:CEV} give that for CIR process
$\mathbb P(\max_{0\leq t\leq T}Y_t\geq C)=O(C^{-k})$, $k\geq 1$. Moreover, from Theorem 2 in \cite{GJY}
it follows that
$\mathbb P(\min_{0\leq t\leq T}Y_t\leq B)=O(B^{2\nu})$ for $\nu=\frac{2 \beta}{\sigma^2}-1>0$. Thus if we consider the absorbed diffusion $X$ then
the change of the value of the option price is bounded by $O(C^{-k})+O(B^{2\nu})$ for all $k$. (In fact, potentially we do not make any error by having an absorbing boundary if
$B$ is small enough, as we argued in the previous section.)
\begin{table}[H]
\begin{center}
\caption{Prices of American puts (\ref{5.2})  under (\ref{5.3b*}) with different $K$}
\begin{tabular}{ c c c c }
 & $\mbox{scheme 1}$  &$\mbox{scheme 2}$\\
$K=35$&4.4654&4.5223\\
\hline
$K=40$& 8.1285& 8.1932\\
\hline
$K=45$&12.4498&12.5167\\
\end{tabular}
\end{center}
Parameters used in computation are: $\sigma=2, y=40, T=0.5, r=0.1, B=0.01, C=200,\beta=2,\alpha=0.5,n=30000$.
\end{table}

As the previous example, since the second scheme looks more efficient, we only report its time with the accuracy performance here.
\begin{table}[H]
\begin{center}
\caption{American put prices  under (\ref{5.3b*}) for a variety of $K$'s using the second scheme}
\begin{tabular}{ c c c c c}
 & $K=35$  &$K=40$&$K=45$ \\
$n=100$&4.5140&8.1834&12.5127\\
CPU&0.245s&0.202s&0.198s\\
error(\%)&0.18&0.12&0.03\\
\hline
$n=500$&4.5215&8.1918&12.5163\\
CPU&0.600s&0.585s&0.579s\\
error(\%)&0.02&0.02&0.003\\
\hline
$n=1000$&4.5238&8.1925&12.5170\\
CPU&0.807s&0.813s&0.839s\\
error(\%)&0.03&0.01&0.002\\
\end{tabular}
\end{center}
Parameters used in computation are: $\sigma=2, y=40, T=0.5, r=0.1, B=0.01, C=200,\beta=2,\alpha=0.5$. Error is computed by taking absolute difference between $v_n(x)$ and
$v_{30000}(x)$ and then dividing by $v_{30000}(x)$.
\end{table}

\subsection{Pricing double capped barrier options under CEV by using second scheme}\label{Sec:cbo}
In this subsection, we use second scheme to evaluate double capped barrier call options under the CEV model given by (\ref{5.1}).
Let us denote the European option value by
\begin{equation}\label{5.10}
v_{E}(x)=e^{-rT}\mathbb{E}\left[\mathbb{I}_{T<\tau^*}(S_T-K)^+\right],
\end{equation}
and the American option value by
\begin{equation}\label{5.11}
v_{A}(x)=\sup_{\tau\in\mathcal{T}_{[0,T]}} \mathbb{E}\left[\mathbb{I}_{\tau<\tau^*}e^{-r\tau}(S_{\tau}-K)^+\right],
\end{equation} where $\tau^*=\inf\{t\geq 0; S_t\ne(L, U)\}$.
We use the parameters values given in \cite{DV} and report our results in Table~\ref{table:double-capped} and Figure~\ref{fig:dceev}.
\begin{table}[H]\label{table:double-capped}
\caption{Double capped European Barrier Options (\ref{5.10}) under CEV(\ref{5.1}) (second scheme)}
\begin{center}
\begin{tabular}{ c c c c c c c}
 $U$ & $L$ & $K$ & $\beta=-0.5$  &$\beta=0$&$\beta=-2$&$\beta=-3$\\
\hline
 120 & 90 & 95& 1.9012&1.7197&2.5970&3.2717\\
\hline
&CPU&&0.970s&1.026s&6.203s&1.209s\\
\hline
&error&&0.96\%&1.2\%&0.4\%&2.7\%\\
\hline
120 & 90 & 100& 1.1090&0.9802&1.6101&2.0958\\
\hline
&CPU&&0.957s&1.102s&6.066s&1.205s\\
\hline
&error&&0.98\%&1.2\%&0.44\%&2.3\%\\
\hline
120 & 90 & 105& 0.5201& 0.4473&0.8142&1.1072\\
\hline
&CPU&&0.961s&1.023s&6.205s&1.187s\\
\hline
&error&&1.00\%&1.3\%&0.53\%&2.2\%\\
\hline
&n&&2000&2000&5000&2000\\
\hline
\end{tabular}
\end{center}
\caption{Double capped American Barrier Options (\ref{5.11}) under CEV(\ref{5.1}) (second scheme)}
\begin{center}
\begin{tabular}{ c c c c c c c}
 $U$ & $L$ & $K$ & $\beta=-0.5$  &$\beta=0$&$\beta=-2$&$\beta=-3$\\
\hline
 120 & 90 & 95& 9.8470&9.8271&9.9826&10.0586\\
\hline
&CPU&&0.965s&1.007s&1.248s&1.170s\\
\hline
&error&&0.55\%&0.63\%&0.56\%&0.81\%\\
\hline
120 & 90 & 100& 7.4546&7.4522&7.5118&7.5203\\
\hline
&CPU&&0.942s&1.017s&1.239s&1.211s\\
\hline
&error&&0.70\%&0.8\%&0.47\%&0.57\%\\
\hline
120 & 90 & 105& 5.2612& 5.2788&5.2345&5.1669\\
\hline
&CPU&&0.968s&1.044s&1.145s&1.188s\\
\hline
&error&&1.00\%&1.2\%&0.35\%&0.17\%\\
\hline
&n&&2000&2000&2000&2000\\
\hline
\end{tabular}
\end{center}
Parameters used in calculations are: $x=100, \sigma(100)=0.25$ (denoting $\sigma(S)=\delta S^{1+\beta}$), $ r=0.1, T=0.5$. Error is computed by taking the absolute difference
between $v_n$ and $v_{40000}$ and then dividing by $v_{40000}$.
\end{table}

\begin{figure}[H]\label{fig:dceev}
\begin{center}
\includegraphics[scale=.4]{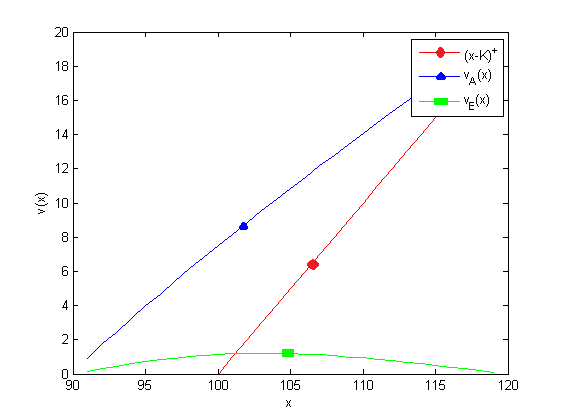}\includegraphics[scale=.4]{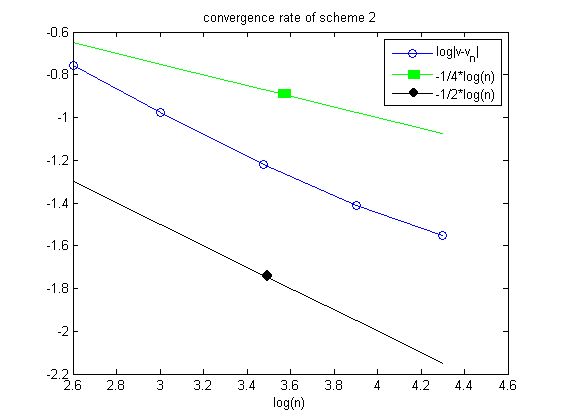}
\end{center}
\caption{The left figure shows the value function $v_A(x)$(\ref{5.11}) and $v_E(x)$(\ref{5.10}) with Parameters: $\delta=2.5, r=0.1, T=0.5, \beta=-0.5, n=5000, L=90, U=120,
K=100$,. In the right figure, we show the $\log{|v-v_n|}$ vs $\log{n}$ picking $v(x)=v_{40000}(x)$, $x=100$, and $n\in \{400, 1000, 3000, 8000, 20000\}$. The slope of the blue
line given by linear regression is $-0.47178$.}
\end{figure}

\subsection{Pricing double capped American barrier options with jump volatility by using second scheme}\label{sec:jump}
In this subsection, we show the numerical results for pricing double capped American barrier option when the volatility has a jump.
Consider the following modification of a geometric Brownian motion:
\begin{equation}\label{5.16}
dS_t=rS_tdt+\sigma(S_t)S_tdW_t, S_0=x
\end{equation}
where $\sigma(S_t)=\sigma_1$, for $S_t\in[0,S_1]$, $\sigma(S_t)=\sigma_2$, for $S_t\in[S_1,\infty)$.
We will compute the American option price
\begin{equation}\label{5.17}
v(x)=\sup_{\tau\in\mathcal{T}_{[0,T]}}\mathbb{E}\left[e^{-r\tau}\mathbb I_{\tau<\tau^*}(S_{\tau}-K)^+\right],
\end{equation}
where $\tau^*=\inf\{t\geq 0; S_t\ne(L, U)\}$. We use scheme 2 here, since it can handle discontinuous coefficients; see Figures~\ref{fig:disc} and \ref{fig:conv-rate}. We
compare it to the geometric Brownian motion (gBm) problem with different volatility parameters. In particular, we observe that although the jump model's price and the gBm model
with average volatility are close in price, the optimal exercise boundaries differ significantly.

\begin{figure}[H]\label{fig:disc}
\begin{center}
\includegraphics[scale=.4]{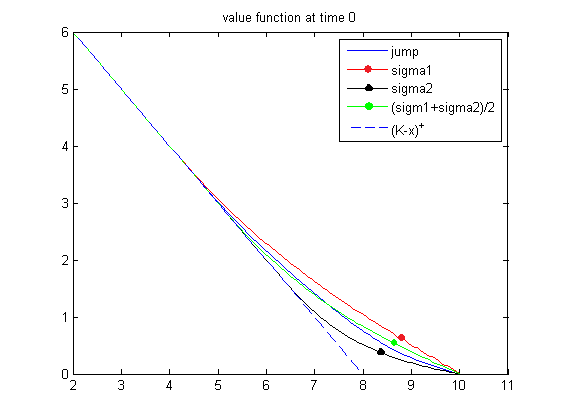}\includegraphics[scale=.4]{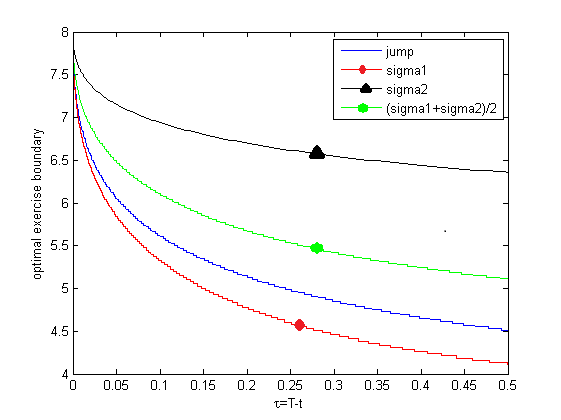}
\end{center}
\caption{There are four curves in the left figure. They are value functions (\ref{5.17}) under model (\ref{5.16}) with different values of $\sigma(S_t)$ as specified in the
legend. The parameters used in computations are: $n=4000, r=0.1, S_1= K=8, L=2, U=10, \sigma_1=0.7, \sigma_2=0.3, T=0.5$. Here $\sigma_2<\frac{\sigma_1+\sigma_2}{2}<\sigma_1$,
and as expected the gBm option price decreases as $\sigma$ decreases but the gBm price with jump volatility and one corresponding to gBm with $\sigma=\sigma_2$ intersect
in the interval $[7,8]$. The right figure is of the optimal exercise boundary curves.}
\end{figure}

\begin{figure}[H]\label{fig:conv-rate}
\begin{center}
\includegraphics[scale=.4]{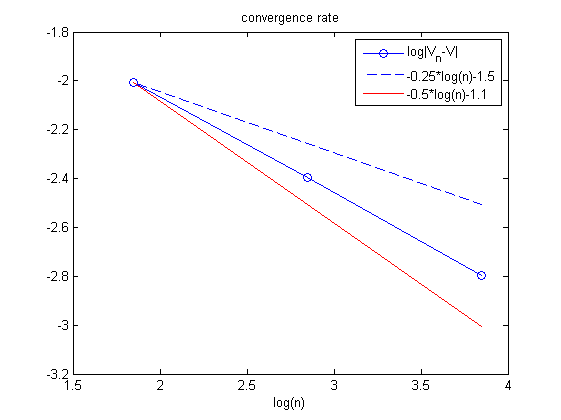}
\end{center}
\caption{Convergence rate figure of scheme 2 in the implement of double capped American put (\ref{5.17}) under (\ref{5.16}). We pick $v(x)=v_{40000}(x)$ to be the actual price,
and the other three values are $v_{70}(x),v_{700}(x), v_{7000}(x), x=8$. The slope of the blue line given by linear regression approach is $-0.39499$. Parameters used in
computation: $n=4000, r=0.1, K=8, L=2, U=10, \sigma_1=0.7, \sigma_2=0.3, S1=8, T=0.5$.}
\end{figure}

\newpage
\bibliography{ref}{}
\bibliographystyle{siam}

\end{document}